\newtheorem{Remark}{Remark}[section]
\newtheorem{Corollary}[Remark]{Corollary}
\newtheorem{Definition}[Remark]{Definition}
\newtheorem{Example}[Remark]{Example}
\newtheorem{Fact}[Remark]{Fact}
\newtheorem{Lemma}[Remark]{Lemma}
\newtheorem{Proposition}[Remark]{Proposition}
\newtheorem{Theorem}[Remark]{Theorem}
\newcommand{\ba}{\begin{array}}
\newcommand{\bc}{\begin{center}}
\newcommand{\bd}{\begin{description}}
\newcommand{\bdm}{\begin{displaymath}}
\newcommand{\be}{\begin{enumerate}}
\newcommand{\beq}{\begin{equation}}
\newcommand{\bdf}{\begin{Definition}}
\newcommand{\bex}{\begin{Example}}
\newcommand{\bft}{\begin{Fact}}
\newcommand{\bl}{\begin{Lemma}}
\newcommand{\bp}{\begin{Proposition}}
\newcommand{\br}{\begin{Remark}}
\newcommand{\bt}{\begin{Theorem}}
\newcommand{\bco}{\begin{Corollary}}
\newcommand{\bhy}{\begin{Hypothesis}}
\newcommand{\ea}{\end{array}}
\newcommand{\ec}{\end{center}}
\newcommand{\ed}{\end{description}}
\newcommand{\edm}{\end{displaymath}}
\newcommand{\ee}{\end{enumerate}}
\newcommand{\eeq}{\end{equation}}
\newcommand{\edf}{\end{Definition}}
\newcommand{\eex}{\end{Example}}
\newcommand{\eft}{\end{Fact}}
\newcommand{\el}{\end{Lemma}}
\newcommand{\ep}{\end{Proposition}}
\newcommand{\er}{\end{Remark}}
\newcommand{\et}{\end{Theorem}}
\newcommand{\eco}{\end{Corollary}}
\newcommand{\ehy}{\end{Hypothesis}}
\newcommand{\bH}{\mathbb{H}}
\newcommand{\bN}{\mathbb{N}}
\newcommand{\bR}{\mathbb{R}}
\newcommand{\bV}{\mathbb{V}}
\newcommand{\bW}{\mathbb{W}}
\newcommand{\bX}{\mathbb{X}}
\newcommand{\bY}{\mathbb{Y}}
\newcommand{\bZ}{\mathbb{Z}}
\newcommand{\cA}{\mathcal{A}}
\newcommand{\cB}{\mathcal{B}}
\newcommand{\cC}{\mathcal{C}}
\newcommand{\cD}{\mathcal{D}}
\newcommand{\cL}{\mathcal{L}}
\newcommand{\cN}{\mathcal{N}}
\newcommand{\cR}{\mathcal{R}}
\newcommand{\cS}{\mathcal{S}}
\newcommand{\cX}{\mathcal{X}}
\newcommand{\cY}{\mathcal{Y}}
\newcommand{\cZ}{\mathcal{Z}}
\newcommand{\im}{\mathrm{ im \;}}
\numberwithin{equation}{section} \errorcontextlines=0
\newcommand{\cl}{\mathrm{cl}}
\newcommand{\sone}{S^1}
\newcommand{\ds}{\displaystyle}
\newcommand{\subga}{\overline{\mathrm{sub}}(\Gamma)}
\newcommand{\sub}{\overline{\mathrm{sub}}}
\newcommand{\cw}{\mathrm{CW}}
\newcommand{\gcw}{\Gamma\text{-CW}}
\newcommand{\hcw}{H\text{-CW}}
\newcommand{\h}{\mathbb{H}}
\begin{document}

\title[Symmetric Newtonian systems]{Periodic solutions to symmetric Newtonian systems in neighborhoods of orbits of equilibria}

\author{Anna Go{\l}\c{e}biewska}
\address{Faculty of Mathematics and Computer Science \\ Nicolaus Copernicus University in Toru\'n\\
PL-87-100 Toru\'{n} \\ ul. Chopina $12 \slash 18$ \\
Poland}

\author{Marta Kowalczyk}

\author{S{\l}awomir Rybicki}

\author{Piotr Stefaniak}

\email{aniar@mat.umk.pl (A. Go{\l}\c{e}biewska)}
\email{martusia@mat.umk.pl (M. Kowalczyk)}
\email{rybicki@mat.umk.pl (S. Rybicki)}
\email{cstefan@mat.umk.pl (P. Stefaniak)}

\date{\today}

\keywords{Lyapunov center theorem, symmetric Newtonian systems, equivariant Conley index}
\subjclass[2020]{Primary: 37J46; Secondary: 37J20}

\begin{abstract}
The aim of this paper is to prove the existence of periodic solutions to symmetric Newtonian systems in any neighborhood of an isolated orbit of equilibria. Applying equivariant bifurcation techniques we obtain a generalization of the classical Lyapunov center theorem to the case of symmetric potentials with orbits of non-isolated critical points. Our tool is an equivariant version of the Conley index. To compare the indices we compute cohomological dimensions of some orbit spaces.
\end{abstract}

\maketitle

 \bc  {\bf On  the occasion of the 75th birthday of Professor E. N. Dancer}\ec

\section{Introduction}

One of the most important classical problems of differential equations is the search for non-stationary periodic solutions to Hamiltonian and Newtonian systems in a neighborhood of an isolated equilibrium. This problem has a long history and has been studied by many mathematicians for centuries, see the following classical articles due to Lyapunov \cite{lyapunov}, Weinstein \cite{weinstein},  Moser \cite{moser},  Fadell and Rabinowitz \cite{fadrab},  Montaldi, Roberts and Stewart \cite{morost}, Bartsch \cite{bartsch1} and references therein. We are aware that this list is far from being complete. Note that the stationary solutions considered in these papers have been assumed to be non-degenerate. It is worth to point out that non-degenerate as well as isolated degenerate   stationary solutions have been considered by  Dancer and the third author in \cite{danryb} and Szulkin in \cite{szulkin}.

In this article we focus our attention on symmetric Newtonian systems.
 More precisely, we consider $\bR^n$ as an orthogonal representation of a compact Lie group $\Gamma$ and a $\Gamma$-invariant potential $U\colon \bR^n \to \bR$ of class $C^2$, i.e. the potential $U$ satisfying $U(\gamma u)=U(u)$ for all $\gamma \in \Gamma$ and $u \in \bR^n.$ If $u_0$ is a critical point of $U,$ i.e. $\nabla U(u_0)=0,$ then the orbit $\Gamma(u_0) =  \{\gamma u_0\colon \gamma \in \Gamma\}$ consists of critical points of $U$, i.e.
$\Gamma (u_0) \subset (\nabla U)^{-1}(0).$  It is known that the orbit $\Gamma(u_0)$ is $\Gamma$-homeomorphic to $\Gamma \slash \Gamma_{u_0},$ where $\Gamma_{u_0}=\{\gamma \in \Gamma\colon \gamma u_0=u_0\}$ is the stabilizer of $u_0.$ Hence if $\dim \Gamma \geq 1,$ then  it can happen that $\dim \Gamma (u_0) \geq 1$, i.e. the critical point $u_0$ is not  isolated  in $(\nabla U)^{-1}(0).$

We study the existence of non-stationary periodic solutions of the following $\Gamma$-symmetric system
\beq \label{equ} \ddot u(t) = - \nabla U(u(t)) \eeq in any neighborhood of the isolated orbit $\Gamma (u_0) \subset (\nabla U)^{-1}(0).$
In other words, we are going to prove a symmetric version of the classical Lyapunov center theorem, where, due to additional $\Gamma$-symmetries, an isolated stationary solution is replaced by an isolated orbit of stationary solutions. 
These theorems, which are the main results of our paper, are formulated below.

Define $\sigma^+(\nabla^2 U(u_0))=\sigma(\nabla^2 U(u_0)) \cap (0,+\infty)$  and put $\cB=\{\beta_1,\beta_2,\ldots,\beta_q\}$, where $\beta_1>\beta_2>\ldots>\beta_q>0$ are such that  $\sigma^+(\nabla^2 U(u_0)) = \{\beta_1^2,\ldots, \beta_q^2\}$.

\bt \label{main-theo1} [Symmetric Lyapunov center theorem for a non-degenerate orbit]
Let  $\Omega \subset \bR^n$ be an open and $\Gamma$-invariant subset of an orthogonal representation $\bR^n$ of a compact Lie group $\Gamma$.
Assume that $U\colon\Omega \to \bR$ is a $\Gamma$-invariant potential of class $C^2$ and $u_0 \in \Omega \cap (\nabla U)^{-1}(0)$.
If
\be
\item $\Gamma_{u_0}=S^1$ or $\Gamma_{u_0}=\bZ_m$ for some $m \in \bN,$
\item $\dim\ker\nabla^2 U(u_0)=\dim \Gamma(u_0),$
\item $\sigma^+(\nabla^2 U(u_0))\neq \emptyset$,
\ee
then, for any $\beta_{j_0}\in\cB$ such that $\beta_j \slash \beta_{j_0} \not \in \bN $ for $\beta_j\in\cB\setminus\{\beta_{j_0}\}$, there exists a sequence $(u_k)$ of periodic solutions of the system \eqref{equ} with a sequence $(T_k)$ of minimal periods such that $T_k\to 2 \pi \slash \beta_{j_0}$ and   for any $\varepsilon>0$ there exists $k_0\in\bN$ such that $u_k([0,T_k])\subset \Gamma(u_0)_\varepsilon=\bigcup_{u\in \Gamma(u_0)} B_\epsilon(\bR^n,u)$ for all $k\geq k_0.$
\et

\bt\label{main-theo2} [Symmetric Lyapunov center theorem for a minimal orbit]
Let  $\Omega \subset \bR^n$ be an open and $\Gamma$-invariant subset of an orthogonal representation $\bR^n$ of a compact Lie group $\Gamma$.
Assume that $U\colon\Omega \to \bR$ is a $\Gamma$-invariant potential of class $C^2$ and $u_0 \in \Omega \cap (\nabla U)^{-1}(0)$. If moreover,
\be
\item $\Gamma_{u_0}=S^1$ or $\Gamma_{u_0}=\bZ_m$ for some $m \in \bN,$
\item $\Gamma(u_0)$ consists of minima of the potential $U$,
\item $\Gamma(u_0)$ is isolated in $(\nabla U)^{-1}(0)$,
\item $\sigma^+(\nabla^2 U(u_0))\neq \emptyset$,
\ee
then, for any $\beta_{j_0}\in\cB$ such that $\beta_j \slash \beta_{j_0} \not \in \bN $ for $\beta_j\in\cB\setminus\{\beta_{j_0}\}$, there exists a sequence $(u_k)$ of periodic solutions of the system \eqref{equ} with a sequence $(T_k)$ of minimal periods such that $T_k\to 2 \pi \slash \beta_{j_0}$ and  for any $\varepsilon>0$ there exists $k_0\in\bN$ such that $u_k([0,T_k])\subset \Gamma(u_0)_\varepsilon$ for all $k\geq k_0.$
\et

Note that if the assumptions of the above theorems are satisfied, then obviously $\beta_{1}\in \cB$ and $\beta_j \slash \beta_{1} \not \in \bN $ for $\beta_j\in\cB\setminus\{\beta_{1}\}$, which implies that there exists at least one sequence $(u_k)$ of periodic solutions of the system \eqref{equ} with a sequence $(T_k)$ of minimal periods converging to $2 \pi \slash \beta_{1}$. Moreover, selecting various $\beta_{j_0}$ satisfying the assumptions we obtain different sequences of periodic solutions which can be distinguished by their minimal periods. 


We emphasize that in Theorem \ref{main-theo1} we consider a non-degenerate orbit $\Gamma(u_0)$, i.e. an orbit satisfying the condition  $\dim \ker \nabla^2 U(u_0) = \dim \Gamma (u_0)$, whereas in Theorem \ref{main-theo2} we allow the orbit $\Gamma (u_0)$ to be degenerate, i.e. such that  $\dim \ker \nabla^2 U(u_0) > \dim \Gamma (u_0).$

Results of this type have been proved in  \cite{PRS,PRS2} for symmetric Newtonian systems and in \cite{Strzelecki} for symmetric Hamiltonian systems under the assumption that the stabilizer $\Gamma_{u_0}$ is trivial, i.e. the orbit $\Gamma (u_0)$ is $\Gamma$-homeomorphic to the group $\Gamma.$
On the other hand, in \cite{KPR} we have considered symmetric Newtonian systems under the assumption that the stabilizer $\Gamma_{u_0}$ is isomorphic to a finite-dimensional torus  $T \subset \Gamma$, i.e. the orbit $\Gamma (u_0)$ is $\Gamma$-homeomorphic to $\Gamma \slash T.$

In our paper we consider orbits $\Gamma (u_0)$ such that the stabilizer $\Gamma_ {u_0}$ equals $\sone $ or $\bZ_m,$ i.e. the orbit $\Gamma(u_0)$ is $\Gamma$-homeomorphic to $\Gamma \slash \sone$ or $\Gamma \slash \bZ_m.$ It is worth pointing out that the case $\Gamma_{u_0} = \bZ_m$ has not been considered in our previous articles.

In \cite{KPR, PRS,PRS2,Strzelecki} the solutions of the system \eqref{equ} have been considered as orbits of critical points of a family of $(\Gamma \times \sone)$-invariant functionals defined on an appropriately chosen Hilbert space, which is an orthogonal  representation of the group $\Gamma \times \sone$. To prove the main results of these articles we have applied the techniques of equivariant bifurcation theory.  As a topological tool we have used the infinite-dimensional generalization of the $(\Gamma \times \sone)$-equivariant Conley index defined by Izydorek, see \cite{Izydorek}. To show the existence of non-stationary periodic solutions of the  system \eqref{equ} in an arbitrary neighborhood of the orbit $\Gamma (u_0) $ we have proved the change of this index.
To distinguish between two Conley indices we have used the equivariant Euler characteristic, which is an element of the Euler ring $U(\Gamma \times \sone),$ see \cite{dieck}. These calculations were rather tedious and complicated.

In this paper we apply a different approach. Namely, we consider orbit spaces of the equivariant Conley indices, which are the homotopy types of the weighted projective spaces or the lens complexes. Since the cohomology groups of the weighted projective spaces and the lens complexes are known, see \cite{kawasaki}, we compute the cohomological dimensions of these spaces. This technique of distinguishing between equivariant Conley indices seems to be much simpler.

To be more precise, instead of making calculations in the Euler ring $U(\Gamma \times \sone)$ we employ the concept of a quotient space and use the notion of the cohomological dimension to prove a change of the equivariant Conley index.

The topological results described above are given in Section \ref{sec:abstract}, whereas Section \ref{sec:proofs} contains the proofs of Theorems \ref{main-theo1} and \ref{main-theo2}. Additionally, in the appendix we recall some relevant material needed in the previous sections.

\section{Abstract results}\label{sec:abstract}

In this section we study the $\Gamma$-homotopy equivalence of some finite pointed $\gcw$-complexes (see \cite{dieck} for the definition), where $\Gamma$ is a compact Lie group.
In particular, we are interested in complexes being of the form of smash products over some groups
$H \in \subga,$ where $\subga$ denotes the set of closed subgroups of $\Gamma$.
Below we recall the definition of such a space, see \cite{dieck} for details.

Fix $H \in \subga$ and let $\bX$ be a pointed $H$-space with a base point $\ast.$ Denote by  $\Gamma^+$  the group $\Gamma$ with a disjoint $\Gamma$-fixed base point added.
Recall that the smash product of $\Gamma^+$ and $\bX$ is $\Gamma^+ \wedge \bX=\Gamma^+ \times \bX \slash \Gamma^+ \vee \bX =\Gamma \times \bX \slash \Gamma \times \{\ast\}.$
The group $H$ acts on the pointed space $\Gamma^+ \wedge \bX$ by $(h,[\gamma,y]) \mapsto [\gamma h^{-1},hy].$
We denote the orbit space of this action by $\Gamma^+ \wedge_H \bX$ and call it the smash over $H.$ The formula
$(\gamma',[\gamma,y]) \mapsto [\gamma'\gamma,y]$
induces a $\Gamma$-action so that $\Gamma^+ \wedge_H \bX$ becomes a pointed $\Gamma$-space.

\begin{Remark}
If $\bX$ is a finite pointed $H$-CW-complex, then  $\Gamma^+ \wedge_H \bX$ is a $\Gamma$-CW complex, see \cite{PRS}, and the orbit space $\bX \slash H$ is a finite pointed $\cw$-complex, see \cite{dieck}.
\end{Remark}

For brevity we will write $\bX \approx_H \bY$ if $H$-CW-complexes $\bX$, $\bY$ are $H$-homotopically equivalent. In the non-equivariant case, $\cX \approx \cY$ denotes homotopical equivalence of CW-complexes $\cX$, $\cY$.

\begin{Lemma}\label{lem:Gsmashtosmash}
Let  $\bX,\bY$ be finite pointed $\hcw$-complexes. If $\Gamma^+ \wedge_H \bX \approx_\Gamma  \Gamma^+ \wedge_H \bY$, then $\bX \slash H  \approx \bY \slash H.$
\end{Lemma}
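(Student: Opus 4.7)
The plan is to pass to orbit spaces. The key observation is that, for any finite pointed $\hcw$-complex $\bX$, there is a natural homeomorphism of pointed CW-complexes
\[
\bigl(\Gamma^+ \wedge_H \bX\bigr)\slash \Gamma \;\cong\; \bX\slash H.
\]
To see this I would define a map $\varphi\colon \bX \to \Gamma^+ \wedge_H \bX$ by $y \mapsto [e,y]$. It is $H$-equivariant (where the target carries the $H$-action inherited through the inclusion $H \hookrightarrow \Gamma$ acting from the left on the first factor; equivalently, by the original definition, since $[eh,y] = [h,y] = [e,hy]$). Composing with the quotient map $\Gamma^+ \wedge_H \bX \to (\Gamma^+ \wedge_H \bX)\slash \Gamma$ gives a map constant on $H$-orbits, hence factors through $\bX\slash H$. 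For the inverse, every class in $\Gamma^+ \wedge_H \bX$ has a representative of the form $[e,y]$ (since $[\gamma,y]$ lies in the same $\Gamma$-orbit as $[e,y]$), and two such representatives are $\Gamma$-equivalent precisely when the underlying $y$'s differ by an element of $H$; the basepoints match because the distinguished class $[\Gamma,\ast]$ corresponds to the orbit $H\ast \in \bX\slash H$.

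The second ingredient is the standard fact that taking orbit spaces is a functor from pointed $\gcw$-complexes to pointed CW-complexes which preserves $\Gamma$-homotopies: a $\Gamma$-equivariant map $f\colon \bA \to \bB$ descends to $\bar f\colon \bA\slash \Gamma \to \bB\slash \Gamma$, and a $\Gamma$-homotopy $F\colon \bA \times [0,1] \to \bB$ (where $\Gamma$ acts trivially on $[0,1]$) descends to an ordinary homotopy between the induced maps. Consequently, a $\Gamma$-homotopy equivalence $\Gamma^+ \wedge_H \bX \approx_\Gamma \Gamma^+ \wedge_H \bY$ induces a homotopy equivalence of their orbit spaces.

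Putting the two steps together: the $\Gamma$-homotopy equivalence gives
\[
\bigl(\Gamma^+ \wedge_H \bX\bigr)\slash \Gamma \;\approx\; \bigl(\Gamma^+ \wedge_H \bY\bigr)\slash \Gamma,
\]
and the identification of both sides with $\bX\slash H$ and $\bY\slash H$ respectively yields $\bX\slash H \approx \bY\slash H$, as required.

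The main obstacle is really only of a bookkeeping nature, namely checking the homeomorphism $(\Gamma^+ \wedge_H \bX)\slash \Gamma \cong \bX\slash H$ carefully, including the behaviour of basepoints and verifying that the identification is a homeomorphism of CW-complexes (which is automatic given that both spaces arise as quotients of finite $\gcw$- and $\hcw$-structures, with the $H$-CW-structure on $\bX$ inducing the CW-structure on $\bX\slash H$ and the $\gcw$-structure on $\Gamma^+ \wedge_H \bX$ inducing the CW-structure on the $\Gamma$-quotient). Once this identification is in hand, the functoriality of orbit-taking with respect to $\Gamma$-homotopy equivalence is routine.
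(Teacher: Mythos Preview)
Your proposal is correct and follows essentially the same strategy as the paper: pass to $\Gamma$-orbit spaces (using that quotienting by $\Gamma$ is functorial and preserves equivariant homotopy, which the paper cites as formula~(1.12) of \cite[Chapter~1]{dieck}), and then identify $(\Gamma^+\wedge_H\bX)/\Gamma$ with $\bX/H$. The only difference is cosmetic: the paper obtains this identification via the twisted product $\Gamma\times_H\bX$ and the standard homeomorphism $(\Gamma\times_H\bX)/\Gamma\cong\bX/H$ (citing \cite{dieck,Kawakubo}), whereas you verify it directly by hand via the map $y\mapsto[e,y]$.
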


\begin{proof}
Consider the orbit spaces $(\Gamma^+ \wedge_H \bX) \slash \Gamma$, $(\Gamma^+ \wedge_H \bY) \slash \Gamma$ and note that from the assumption and the formula (1.12) of \cite[Chapter 1]{dieck} we get
\begin{equation}\label{eq:Gsmashtosmash1}
(\Gamma^+ \wedge_H \bX) \slash \Gamma \approx (\Gamma^+ \wedge_H \bY) \slash \Gamma.
\end{equation}
For an $H$-space $\bW$ denote by $\Gamma\times_H\bW$ the twisted product over $H$. The inclusion $\bW \to \Gamma \times_H \bW$ induces a homeomorphism of $\bW \slash H$ and $(\Gamma \times_H \bW) \slash \Gamma$, see for instance \cite{dieck}, \cite{Kawakubo}. Moreover, if $\bW$ is a pointed $H$-space, $\Gamma^+\wedge_H\bW  = (\Gamma\times_H\bW)/(\Gamma\times_H\{\ast\})$.
Therefore,
\begin{equation}\label{eq:Gsmashtosmash2}
(\Gamma^+ \wedge_H \bX) \slash \Gamma \approx \bX \slash H\ \text{ and }\ (\Gamma^+ \wedge_H \bY) \slash \Gamma \approx \bY \slash H.
\end{equation}
Combining \eqref{eq:Gsmashtosmash1} and \eqref{eq:Gsmashtosmash2} we obtain the assertion.
\end{proof}

From this lemma it follows that in some cases we can reduce comparing the $\Gamma$-equivariant homotopy types of $\Gamma$-CW-complexes of the form $\Gamma^+ \wedge_H \bX$ to comparing the homotopy types of CW-complexes of the form $\bX \slash H$.

To consider the homotopy equivalence of CW-complexes we will use the notion of the cohomological dimension of a $\cw$-complex.
For a $\cw$-complex $\cZ$ denote by $\widetilde H^k(\cZ;\bZ)$ the $k$-th reduced cohomology group of $\cZ,$ see \cite{hatcher}.
If there exists a number $k\geq 0$ such that $\widetilde H^k(\cZ;\bZ) \not = 0$, then we define the cohomological dimension of a $\cw$-complex $\cZ$ by
$$\cC\cD(\cZ):=\max\{k \in \bN \cup \{0\}\colon \widetilde H^k(\cZ;\bZ) \not = 0\}.$$
Obviously, if $\cZ_1, \cZ_2$ are homotopically equivalent $\cw$-complexes, then $\cC\cD(\cZ_1)=\cC\cD(\cZ_2)$.

\begin{Lemma}\label{lem:smash} Let $\cZ_1, \cZ_2$ be finite $\cw$-complexes such that $\cC\cD(\cZ_1)$ and $\cC\cD(\cZ_2)$ are well-defined. Then  we have
$\cC\cD(\cZ_1 \wedge \cZ_2) = \cC\cD(\cZ_1) + \cC\cD(\cZ_2).$
\end{Lemma}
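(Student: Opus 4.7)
The plan is to read off both inequalities from a single application of the K\"unneth formula to $\widetilde H^\ast(\cZ_1 \wedge \cZ_2;\bZ)$ in top degree.

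Set $n_i := \cC\cD(\cZ_i)$ for $i = 1, 2$. Since each $\cZ_i$ is a finite CW-complex, its reduced integral cohomology is finitely generated in every degree and vanishes for $k > n_i$. The K\"unneth theorem for a smash product of finite CW-complexes with $\bZ$-coefficients then gives, for every $n$, a (non-canonically split) short exact sequence
$$0 \to \bigoplus_{p+q=n} \widetilde H^p(\cZ_1;\bZ) \otimes \widetilde H^q(\cZ_2;\bZ) \to \widetilde H^n(\cZ_1 \wedge \cZ_2;\bZ) \to \bigoplus_{p+q=n+1} \operatorname{Tor}\bigl(\widetilde H^p(\cZ_1;\bZ),\widetilde H^q(\cZ_2;\bZ)\bigr) \to 0.$$

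From this the upper bound $\cC\cD(\cZ_1 \wedge \cZ_2) \leq n_1 + n_2$ is immediate: whenever $n > n_1 + n_2$, every direct summand on both ends requires $p > n_1$ or $q > n_2$, hence vanishes. For the reverse bound I would evaluate at $n = n_1 + n_2$. There the Tor sum is zero since $p + q = n_1 + n_2 + 1$ still forces $p > n_1$ or $q > n_2$, while in the tensor sum the only potentially nonzero summand is $\widetilde H^{n_1}(\cZ_1;\bZ) \otimes \widetilde H^{n_2}(\cZ_2;\bZ)$. The above sequence therefore collapses to
$$\widetilde H^{n_1 + n_2}(\cZ_1 \wedge \cZ_2;\bZ) \cong \widetilde H^{n_1}(\cZ_1;\bZ) \otimes \widetilde H^{n_2}(\cZ_2;\bZ),$$
so the remaining task is to verify that the right-hand side is non-zero.

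This non-vanishing is the key obstacle, since it does not follow formally from non-triviality of the two factors (e.g.\ $\bZ/2 \otimes \bZ/3 = 0$), and therefore requires additional information about the complexes $\cZ_i$. In the setting of this paper the CW-complexes in question are orbit spaces of the form discussed above, which by the computations in \cite{kawasaki} (weighted projective spaces and lens complexes) have top reduced integer cohomology isomorphic to $\bZ$; hence $\widetilde H^{n_1}(\cZ_1;\bZ) \otimes \widetilde H^{n_2}(\cZ_2;\bZ) \cong \bZ$ is non-zero. Combining this with the upper bound yields the claimed equality $\cC\cD(\cZ_1 \wedge \cZ_2) = n_1 + n_2$.
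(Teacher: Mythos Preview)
Your argument via the K\"unneth short exact sequence is essentially the paper's, but you are more careful at the decisive step. The paper asserts that the reduced integral cohomology groups of a finite CW-complex are ``finitely generated free'' and then invokes a Tor-free K\"unneth isomorphism; that freeness claim is simply false (e.g.\ $\widetilde H^2(\bR P^2;\bZ)\cong\bZ/2$), and with it the paper's argument breaks down. Your worry that the top tensor product might vanish is therefore not a nicety but a genuine obstruction: the lemma as stated is in fact false. With $\cZ_1=\bR P^2$ and $\cZ_2=S^1\cup_3 e^2$ one has $\cC\cD(\cZ_1)=\cC\cD(\cZ_2)=2$, while the K\"unneth sequence gives $\widetilde H^k(\cZ_1\wedge\cZ_2;\bZ)=0$ for every $k$, so $\cC\cD(\cZ_1\wedge\cZ_2)$ is not even defined.

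Your workaround---arguing only for the spaces that actually occur---is the correct fix, and it suffices for the paper. In the sole application (the proof of Lemma~\ref{lem:CD}) one of the two factors is the sphere $S^{k_0}$, whose top reduced cohomology is $\bZ$; the top-degree tensor product is then $\bZ\otimes\widetilde H^{n_2}(\cZ_2;\bZ)\cong\widetilde H^{n_2}(\cZ_2;\bZ)\neq 0$, and one does not even need \cite{kawasaki} at this stage. So while you have not proved the lemma as written (no one can), you have correctly located both the gap in the paper's proof and an honest remedy for the intended use.
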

\begin{proof}
Since $\cZ_1, \cZ_2$ are finite CW-complexes, the groups $\widetilde H^{q_1}(\cZ_1;\bZ)$, $\widetilde H^{q_1}(\cZ_2;\bZ)$ are finitely generated free groups for every $q_1,q_2\in\bN\cup\{0\}$. Therefore,
by the K$\ddot{\rm u}$nneth formula, see \cite{hatcher}, we obtain the reduced cross product isomorphism
$$\times\colon \widetilde H^{q_1}(\cZ_1;\bZ) \otimes  \widetilde H^{q_2}(\cZ_2;\bZ) \to \widetilde H^{q_1+q_2}(\cZ_1 \wedge \cZ_2;\bZ).$$
That is why  the reduced cross product $$\times\colon \widetilde H^{\cC\cD(\cZ_1)}(\cZ_1;\bZ) \otimes \widetilde H^{\cC\cD(\cZ_2)}(\cZ_2;\bZ) \to \widetilde H^{\cC\cD(\cZ_1) + \cC\cD(\cZ_2)}(\cZ_1 \wedge \cZ_2;\bZ)$$
is an isomorphism of non-trivial groups.

What is left is to show that $\widetilde H^{q}(\cZ_1 \wedge \cZ_2;\bZ)=0$ for any $q >\cC\cD(\cZ_1) + \cC\cD(\cZ_2). $ Note that if  $q > \cC\cD(\cZ_1) + \cC\cD(\cZ_2) $ and $q=q_1+q_2$ then
 $q_1  > \cC\cD(\cZ_1) \text{ or } q_2 > \cC\cD(\cZ_2).$
Therefore the reduced cross product
$$\times\colon \widetilde H^{q_1}(\cZ_1;\bZ) \otimes  \widetilde H^{q_2}(\cZ_2;\bZ) \to \widetilde H^{q_1+q_2}(\cZ_1 \wedge \cZ_2;\bZ)$$ is an isomorphism of trivial groups, which completes the proof.
\end{proof}

From now on, we  assume that $H$ is a closed subgroup of $S^1$, i.e. $H\in\sub(S^1)=\{S^1,\bZ_1,\bZ_2\ldots \}$.
Let $\bV$ be an orthogonal representation of the group $H$ with the isotypical decomposition
\begin{equation}\label{eq:isotyp}
\bV  =  \bR[k_0,0] \oplus \bR[k_1,m_1] \oplus \ldots \oplus \bR[k_p,m_p]=\bR[k_0,0] \oplus \widehat\bV,
\end{equation}
where $\widehat\bV=\bR[k_1,m_1] \oplus \ldots \oplus \bR[k_p,m_p]$, $k_0 \in \bN \cup \{0\}$, $k_1,\ldots, k_p \in \bN$ and $m_1,\ldots,m_p\in\bN$ (in the case $H=\bZ_m$ we assume that $m_1,\ldots,m_p \in \{1, \ldots, m-1\}$), see Appendix for more details.

Denote by $S^\bV$ the one-point compactification of $\bV$.
It is known that the space $S^\bV$ is the $H$-homotopy type of a finite pointed $H$-CW-complex, see for instance \cite{Illman}, \cite{mayer}.

\begin{Lemma}\label{lem:CD}
Let $\bV$ be an $H$-representation with the isotypical decomposition given by \eqref{eq:isotyp}. Then
\begin{equation}\label{eq:CD}
\cC\cD(S^\bV \slash H)=
\left\{\begin{array}{lcr}
\displaystyle k_0 +2 \sum_{i=1}^p k_i & \text{ if } & H=\bZ_m, \\
\displaystyle k_0+2 \sum_{i=1}^p k_i -1 & \text{ if } & H=S^1.
\end{array}
\right.
\end{equation}
\end{Lemma}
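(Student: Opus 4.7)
The plan is to reduce the computation to the orbit space of the unit sphere of $\widehat\bV$, recognise this orbit space as a weighted projective space (for $H=S^1$) or a lens complex (for $H=\bZ_m$), and then read off the cohomological dimension from the integral cohomology computed by Kawasaki.

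First I would split off the trivial summand: since $H$ acts trivially on $\bR[k_0,0]$, there is an $H$-homeomorphism $S^\bV \cong S^{k_0} \wedge S^{\widehat\bV}$, and because the action on the first factor is trivial, passing to orbits gives $S^\bV / H \approx S^{k_0} \wedge (S^{\widehat\bV}/H)$ as pointed $\cw$-complexes. Since $\mathcal{CD}(S^{k_0}) = k_0$, Lemma~\ref{lem:smash} yields
\[
\mathcal{CD}(S^\bV/H) = k_0 + \mathcal{CD}(S^{\widehat\bV}/H),
\]
so the problem reduces to computing $\mathcal{CD}(S^{\widehat\bV}/H)$.

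Next I would identify $S^{\widehat\bV}/H$ with the reduced suspension $\Sigma(S(\widehat\bV)/H)$. Indeed, $S^{\widehat\bV}$ is $H$-homeomorphic to $D(\widehat\bV)/S(\widehat\bV)$; since $0\in\widehat\bV$ is $H$-fixed, radial contraction is $H$-equivariant, so $D(\widehat\bV)/H$ is the cone on $S(\widehat\bV)/H$. Hence
\[
S^{\widehat\bV}/H \;\cong\; (D(\widehat\bV)/H)/(S(\widehat\bV)/H) \;\approx\; \Sigma(S(\widehat\bV)/H),
\]
and because $\widetilde H^{k+1}(\Sigma X;\bZ) \cong \widetilde H^k(X;\bZ)$ we get $\mathcal{CD}(S^{\widehat\bV}/H) = \mathcal{CD}(S(\widehat\bV)/H)+1$.

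Third, I would recognise the quotient $S(\widehat\bV)/H$ explicitly. Viewing $\widehat\bV$ as $\bC^{k_1}\oplus\cdots\oplus\bC^{k_p}$ with $H$ acting on $\bC^{k_i}$ with weight $m_i$, the sphere $S(\widehat\bV)$ is the standard $(2\sum k_i-1)$-sphere. When $H=\bZ_m$ the constraint $m_i\in\{1,\ldots,m-1\}$ guarantees that the action on $S(\widehat\bV)$ is free, so $S(\widehat\bV)/\bZ_m$ is a lens complex of dimension $2\sum k_i-1$ whose top reduced integral cohomology is non-trivial by \cite{kawasaki}; thus $\mathcal{CD}(S(\widehat\bV)/\bZ_m)=2\sum k_i-1$. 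When $H=S^1$ the quotient is the weighted projective space $\bP(m_1,\ldots,m_1,\ldots,m_p,\ldots,m_p)$ of complex dimension $\sum k_i-1$ (each $m_i$ appearing $k_i$ times); by Kawasaki's computation its integral cohomology vanishes in odd degrees and is $\bZ$ in every even degree up to $2(\sum k_i-1)$, so $\mathcal{CD}=2\sum k_i-2$. Adding $1$ for the suspension and $k_0$ for the trivial summand yields the two advertised formulae.

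The main technical obstacle is the identification in the third step: one must check carefully that the orbit space $S(\widehat\bV)/H$, expressed via the weights $(m_i,k_i)$ coming from the isotypical decomposition \eqref{eq:isotyp}, is genuinely the space whose cohomology Kawasaki computes in \cite{kawasaki}, so that the top non-vanishing cohomological degree can be read off directly. The remaining steps — the smash decomposition, the suspension identification, and the invocation of Lemma~\ref{lem:smash} — are formal manipulations with pointed $H$-CW-complexes.
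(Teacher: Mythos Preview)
Your argument is essentially the paper's own proof: split off the trivial summand using Lemma~\ref{lem:smash}, identify $S^{\widehat\bV}/H$ with the suspension of $S(\widehat\bV)/H$ to gain one degree, and then read off $\cC\cD(S(\widehat\bV)/H)$ from Kawasaki's Theorems~1 and~2. One small inaccuracy worth fixing: the condition $m_i\in\{1,\ldots,m-1\}$ does \emph{not} force the $\bZ_m$-action on $S(\widehat\bV)$ to be free (take $m=4$, all $m_i=2$; then $2\in\bZ_4$ acts trivially), but this does not matter for the argument, since Kawasaki's lens complexes are defined as orbit spaces of such actions without any freeness assumption, and his Theorem~2 still yields top non-vanishing cohomology in degree $2\sum_i k_i-1$.
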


\begin{proof}
First note that the group $H$ acts trivially on $\bR[k_0,0]$, therefore we can identify $S^{\bR[k_0,0]}$ with $S^{k_0}$. Moreover,
$$S^\bV \slash H =(S^{\widehat\bV\oplus\bR[k_0,0]}) \slash H= (S^{\widehat\bV} \wedge S^{k_0}) \slash H=(S^{\widehat\bV}  \slash H) \wedge S^{k_0}$$
and from Lemma \ref{lem:smash} we obtain
$$\cC\cD(S^\bV \slash H) = \cC\cD((S^{\widehat\bV}  \slash H) \wedge S^{k_0}) = \cC\cD(S^{\widehat\bV}\slash H) + \cC\cD(S^{k_0})=\cC\cD(S^{\widehat\bV}\slash H)+k_0.$$

To find $\cC\cD(S^{\widehat\bV}\slash H)$ we will first show that
\begin{equation}\label{eq:CDort}
\cC\cD(S^{\widehat\bV} \slash H)=\cC\cD(S(\widehat\bV) \slash H)+1,
\end{equation}
where $S(\widehat\bV)$ is the unit sphere in $\widehat\bV$.
To this end consider the unreduced suspension of $S(\widehat\bV)$, denoted by $\Sigma S(\widehat\bV)$, and note that
$S^{\widehat\bV} \approx \Sigma S(\widehat\bV)$. Moreover, $\Sigma (S(\widehat\bV)) \slash H = \Sigma(S(\widehat\bV)\slash H)$.
Therefore, by the suspension isomorphism of cohomology theory, we obtain
$$\widetilde H^{k+1}(S^{\widehat\bV}\slash H;\bZ) =\widetilde  H^{k+1}(\Sigma (S(\widehat\bV)) \slash H;\bZ) =\widetilde H^{k+1}(\Sigma(S(\widehat\bV)\slash H);\bZ) =\widetilde H^k(S(\widehat\bV) \slash H;\bZ) $$
for $k \geq 0$. From this we get \eqref{eq:CDort}.

To compute $\cC\cD(S(\widehat\bV) \slash H)$ we will apply the results of \cite{kawasaki}. Note that $S(\widehat\bV) \slash \bZ_m$ is a lens complex and $S(\widehat\bV) \slash \sone$ is a twisted projective space, see Appendix.
Therefore by Theorem 2 of \cite{kawasaki} we obtain
$$\ds \cC\cD(S(\widehat\bV) \slash \bZ_m)=2 \sum_{i=1}^p k_i -1.$$
 Similarly, applying Theorem 1 of the same paper, we have
$$\ds \cC\cD(S(\widehat\bV) \slash \sone)=2 \sum_{i=1}^p k_i -2.$$
Using \eqref{eq:CDort} we thus obtain \eqref{eq:CD}.
\end{proof}

From Lemma \ref{lem:CD} we immediately obtain the following corollary.
\begin{Corollary}\label{cor:CDvsdim}
If $H\in\sub(S^1)$ and $\bV$ is an orthogonal $H$-representation, then
$$
\cC\cD(S^\bV \slash H)=
\left\{\begin{array}{lcr}
\dim \bV & \text{ if } & H=\bZ_m, \\
\dim\bV-1 & \text{ if } & H=S^1.
\end{array}
\right.
$$
\end{Corollary}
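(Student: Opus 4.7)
The plan is to obtain this corollary as a direct dimension-counting consequence of Lemma~\ref{lem:CD}. The only point to check is that the quantity $k_0 + 2\sum_{i=1}^p k_i$ appearing in the formula~\eqref{eq:CD} coincides with $\dim \bV$.

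First I would fix the isotypical decomposition
$$\bV = \bR[k_0,0] \oplus \bR[k_1,m_1] \oplus \cdots \oplus \bR[k_p,m_p]$$
from \eqref{eq:isotyp} and recall from the appendix (or from the very definition of $\bR[k_i,m_i]$) that $\bR[k_0,0]$ is the trivial $k_0$-dimensional representation, while each summand $\bR[k_i,m_i]$ with $m_i \geq 1$ is the direct sum of $k_i$ copies of a two-dimensional irreducible real representation of $H$ (on which $H$ acts through $m_i$-th power rotations, for $H = S^1$, or through the appropriate rotation in the $\bZ_m$ case). Consequently,
$$\dim \bV = k_0 + 2\sum_{i=1}^p k_i.$$

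Then I would simply substitute this identity into the formula supplied by Lemma~\ref{lem:CD}: in the case $H = \bZ_m$ it gives $\cC\cD(S^{\bV}/H) = k_0 + 2\sum_{i=1}^p k_i = \dim \bV$, whereas in the case $H = S^1$ it gives $\cC\cD(S^{\bV}/H) = k_0 + 2\sum_{i=1}^p k_i - 1 = \dim \bV - 1$, which is precisely the asserted dichotomy.

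There is essentially no obstacle here; the only minor bookkeeping point is to make sure the convention on $\bR[k_i,m_i]$ used in the appendix really gives dimension $2k_i$ whenever $m_i \neq 0$ (so that the trivial isotypical component $\bR[k_0,0]$ is the unique piece whose real dimension is $k_0$ rather than $2k_0$). Once this is verified, the corollary is immediate from Lemma~\ref{lem:CD}.
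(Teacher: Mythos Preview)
Your proposal is correct and matches the paper's approach exactly: the paper simply states that the corollary follows immediately from Lemma~\ref{lem:CD}, and the only content is the observation that $\dim \bV = k_0 + 2\sum_{i=1}^p k_i$ from the definition of the representations $\bR[k_i,m_i]$, which you have spelled out in full.
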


We are now in a position to prove the main theorem of this section.

\begin{Theorem} \label{thm:general}
Suppose that $H\in\sub(S^1)$ and $\bV_1$, $\bV_2$ are orthogonal $H$-representations such that $\dim\bV_1\neq\dim\bV_2$. Then
$\Gamma^+ \wedge_H S^{\bV_1} \not\approx_{\Gamma} \Gamma^+ \wedge_H S^{\bV_2}$.
\end{Theorem}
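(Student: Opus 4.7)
The plan is to proceed by contradiction using the tools already assembled in this section. Assume toward contradiction that $\Gamma^+ \wedge_H S^{\bV_1} \approx_\Gamma \Gamma^+ \wedge_H S^{\bV_2}$. Since $S^{\bV_1}$ and $S^{\bV_2}$ are finite pointed $H$-CW-complexes (by the cited Illman/May references used before Lemma \ref{lem:CD}), Lemma \ref{lem:Gsmashtosmash} applies and yields a homotopy equivalence of the orbit spaces
\[
S^{\bV_1}\slash H \;\approx\; S^{\bV_2}\slash H.
\]

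Next, I would invoke the homotopy invariance of the cohomological dimension noted immediately before Lemma \ref{lem:smash}: homotopy equivalent CW-complexes have equal values of $\cC\cD$. Hence
\[
\cC\cD\bigl(S^{\bV_1}\slash H\bigr) \;=\; \cC\cD\bigl(S^{\bV_2}\slash H\bigr).
\]

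Finally, I would apply Corollary \ref{cor:CDvsdim}, which expresses $\cC\cD(S^{\bV}\slash H)$ purely in terms of $\dim \bV$: namely $\dim\bV$ if $H=\bZ_m$ and $\dim\bV-1$ if $H=S^1$. In both cases the map $\bV \mapsto \cC\cD(S^{\bV}\slash H)$ is injective as a function of the dimension of $\bV$, so the displayed equality forces $\dim \bV_1 = \dim \bV_2$, contradicting the hypothesis and completing the proof.

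There is really no hard step here, since the substantive work was done in Lemmas \ref{lem:Gsmashtosmash}--\ref{lem:CD} and Corollary \ref{cor:CDvsdim}; the theorem is essentially a packaging of those results. The only point to be mindful of is to verify that the hypotheses of Lemma \ref{lem:Gsmashtosmash} are met, i.e.\ that $S^{\bV_i}$ indeed has the $H$-homotopy type of a finite pointed $H$-CW-complex, which is precisely the fact recalled before Lemma \ref{lem:CD}.
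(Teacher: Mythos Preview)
Your proof is correct and follows essentially the same approach as the paper: assume equivariant homotopy equivalence, apply Lemma \ref{lem:Gsmashtosmash} to pass to orbit spaces, use homotopy invariance of $\cC\cD$, and conclude via Corollary \ref{cor:CDvsdim}. Your added remark about verifying that $S^{\bV_i}$ has the $H$-homotopy type of a finite pointed $H$-CW-complex is a nice bit of care, but otherwise the argument is identical to the paper's.
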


\begin{proof}
Let  $\bV_1$, $\bV_2$ satisfy the assumptions of the theorem. Suppose that
$\Gamma^+ \wedge_H S^{\bV_1} \approx_{\Gamma} \Gamma^+ \wedge_H S^{\bV_2}$.
Then, from Lemma \ref{lem:Gsmashtosmash}, the CW-complexes $S^{\bV_1}/H$ and $S^{\bV_2}/H$ are homotopically equivalent and therefore $\cC\cD(S^{\bV_1} \slash H)=\cC\cD(S^{\bV_2} \slash H)$.
Hence, by Corollary \ref{cor:CDvsdim}, $\dim\bV_1=\dim\bV_2$, which completes the proof.
\end{proof}

\section{Proofs of Theorems \ref{main-theo1} and \ref{main-theo2}}\label{sec:proofs}

In this section we prove the symmetric Lyapunov center theorems formulated in Introduction, namely Theorems \ref{main-theo1} and \ref{main-theo2}.
We start with reviewing some classical facts on the variational setting for our problem. The material is standard and well known, see for instance \cite{mawhin} for more details.

Assume that $\Omega\subset\mathbb{R}^n$ is an open and $\Gamma$-invariant subset of an orthogonal representation $\mathbb{R}^n$ of a compact Lie group $\Gamma$ and $U\colon\Omega\rightarrow\mathbb{R}$ is a $\Gamma$-invariant potential of class $C^2.$
Using the techniques of equivariant bifurcation theory we will study periodic solutions of the system \eqref{equ}.

It is well-known that instead of studying solutions of an arbitrary period, one can consider only $2\pi$-periodic  solutions of the parameterized system
\beq \label{eqlambda}
\ddot u(t) = -\lambda^2 \nabla U(u(t)).
\eeq
More precisely, by a standard change of variables, it can be shown that $2\pi\lambda$-periodic solutions of \eqref{equ} correspond to $2\pi$-periodic solutions of \eqref{eqlambda}. Moreover, in this case it is  obvious that we can restrict the consideration of \eqref{eqlambda} to  $\lambda\in(0,+\infty)$.

To reformulate the problem we consider a separable Hilbert space $(\h^1_{2\pi},\langle\cdot,\cdot\rangle_{\h^1_{2\pi}}),$ where
\[\h^1_{2\pi} = \{u\colon [0,2\pi] \rightarrow \bR^n\colon  u \text{ is abs. continuous, } u(0)=u(2\pi), \dot u \in L^2([0,2\pi],\bR^n)\}\]
and
\[\ds \langle u,v\rangle_{\h^1_{2\pi}} = \int_0^{2\pi} (\dot u(t), \dot v(t)) + (u(t),v(t)) \; dt.\]
With our assumptions, this space is an orthogonal representation of $\Gamma$, where the action is given by
$\Gamma \times \h^1_{2\pi} \ni (\gamma,u) \mapsto \gamma u.$
Additionally,
\begin{equation*}
\bH^1_{2\pi} = \overline{\h_0 \oplus \bigoplus_{k=1}^{\infty} \h_k},
\end{equation*}
where $\h_0=\bR^n$ and $\h_k =\{a\cos kt+b\sin kt\colon a,b\in\bR^n\}$, for $k > 0$, are orthogonal representations of $\Gamma.$

Now define a $\Gamma$-invariant functional $\Phi\colon \h^1_{2\pi}\times (0,+\infty)  \to \bR$ of class $C^2$ by the formula
\begin{equation*}  \label{functional}
\Phi(u,\lambda) = \int_0^{2\pi} \left( \frac{1}{2} \| \dot u(t) \|^2 -  \lambda^2U(u(t)) \right) \; dt.
\end{equation*}
 Notice that  $2\pi$-periodic solutions of the system
\eqref{eqlambda} can be considered as critical points of $\Phi$
and so we will study solutions of the following equation:
\beq\label{eqgrad}
\nabla_u\Phi(u,\lambda)=0.
\eeq

Fix $u_0\in(\nabla U)^{-1}(0)$ and define the constant function $\tilde{u}_0 \equiv u_0.$ Then $\tilde{u}_0$ is a stationary solution of the system \eqref{eqlambda} for every $\lambda >0.$ Since $\Gamma(u_0)\subset (\nabla U)^{-1}(0),\, \Gamma(\tilde{u}_0)$ consists of solutions of \eqref{eqlambda} for all $\lambda >0.$
In the rest of this section we use this notation, considering $\Gamma(u_0)$ as a set in $\bR^n$ and $\Gamma(\tilde{u}_0)$ in $\h^1_{2\pi}$. Consequently, we define the neighborhoods of these orbits in appropriate spaces, i.e. $\Gamma(u_0)_{\epsilon} = \bigcup_{u\in \Gamma(u_0)} B_\epsilon(\bR^n,u)\subset \bR^n$ and $\Gamma(\tilde{u}_0)_{\epsilon} = \bigcup_{u\in \Gamma(\tilde{u}_0)} B_\epsilon(\h^1_{2\pi},u)\subset \h^1_{2\pi}$, where $B_{\epsilon}(\bV, u)$ denotes the open ball in the space $\bV$, of radius $\epsilon$, centered in $u$.

Now define  $\mathcal{T}=\Gamma(\tilde{u}_0)\times(0,+\infty) \subset \h^1_{2\pi}\times(0,+\infty)$. The elements of this family are called the trivial solutions of  \eqref{eqgrad}, whereas the elements of the set
\[\mathcal{N}=\{(u,\lambda)\in  \h^1_{2\pi}\times(0,+\infty)\setminus\mathcal{T}\colon\nabla_u\Phi(u,\lambda)=0\}\]
are called the non-trivial solutions.
We will study the existence of local bifurcations of non-trivial solutions of equation \eqref{eqgrad} from $\mathcal{T}$, and so
let us first introduce the notion of a local bifurcation.

\bdf Fix $\lambda_0 > 0.$ The orbit $\Gamma(\tilde{u}_0)\times\{\lambda_0\}\subset\mathcal{T}$ is called an orbit of local bifurcation of solutions of \eqref{eqgrad} if $\Gamma(\tilde{u}_0)\times\{\lambda_0\}\subset \cl(\mathcal{N}).$

\edf

\begin{Remark} From the above definition it follows that if the orbit $\Gamma(\tilde{u}_0) \times \{\lambda_0\}$ is an orbit of local bifurcation, then there exists a sequence $(u_k, \lambda_k)$ in $\cN$ such that $\lambda_k \to \lambda_0$ and $u_k$ is a  $2\pi$-periodic solution of \eqref{eqlambda}, corresponding to $\lambda=\lambda_k$, where for all $\epsilon>0$ there is $k_0 \in \bN$ such that $u_k \in \Gamma(\tilde{u}_0)_{\epsilon}$ for $k \geq k_0$. Moreover, since the orbit $\Gamma(u_0)$ is isolated in the set of critical points of the potential $U$, for $\epsilon$ sufficiently small the obtained subsequence consists of non-stationary $2\pi$-periodic solutions.
\end{Remark}

\begin{Remark}\label{rem:phase}
To prove Theorems \ref{main-theo1} and \ref{main-theo2} we use the fact
(see Lemma 3.1 of \cite{KPR}) that the existence of a local bifurcation in the function space implies the existence of a local bifurcation in the phase space. More precisely,
the existence of a local bifurcation of solutions of \eqref{eqgrad} from the $\Gamma$-orbit $\Gamma(\tilde{u}_0)\times\{\lambda_0\}\subset\mathcal{T}$ implies the existence of a sequence $(u_k)$ of periodic solutions of the system \eqref{equ} with a sequence $(T_k)$ of (not necessarily minimal) periods such that $T_k \to 2\pi \lambda_0$ and  for any $\epsilon>0$ there exists $k_0\in\mathbb{N}$ such that $u_k([0,T_k])\subset \Gamma(u_0)_\epsilon$ for all $k\geq k_0.$
\end{Remark}

To formulate the necessary condition of a local bifurcation we use the following result given in \cite{PRS2}: the orbit $\Gamma(\tilde{u}_0)\times\{\lambda_0\}$ can be an orbit of local bifurcation only if
\begin{equation*}
\ker\nabla_u^2\Phi(\tilde{u}_0,\lambda_0)\cap\bigoplus_{k=1}^{\infty}\bH_k \neq \emptyset,
\end{equation*}
i.e. if the kernel is not fully contained in $\bH_0$.
The description of $\ker \nabla^2_u \Phi(\tilde{u}_0, \lambda_0)$ can be obtained with the use of the formulas characterizing the action of such an operator on subrepresentations $\bH_k$ given in Lemma 5.1.1 of \cite{FRR}. From this lemma it follows that
\begin{equation}\label{eq:opissigma}
\sigma(\nabla^2_u\Phi(\tilde{u}_0, \lambda))=\left\{ \frac{k^2-\lambda^2\alpha}{k^2+1} \colon \alpha \in \sigma (\nabla^2U(u_0)), k=0,1,2, \ldots\right\}.
\end{equation}
Recall that $\cB=\{\beta_1,\beta_2,\ldots,\beta_q\}$, where $\beta_1>\beta_2>\ldots>\beta_q>0$ are such that $\sigma^+(\nabla^2 U(u_0)) = \{\beta_1^2,\ldots, \beta_q^2\}$.
Put $\Lambda=\{\frac{k}{\beta}\colon k \in \bN, \beta\in\cB\}.$
Then we have:

\begin{Fact}\label{fact:necess}
If $\Gamma(\tilde{u}_0) \times \{\lambda_0\}$ is an orbit of local bifurcation of solutions of \eqref{eqgrad} then $\lambda_0 \in \Lambda.$
\end{Fact}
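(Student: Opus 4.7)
The plan is to establish the contrapositive using only the necessary condition for bifurcation recalled from \cite{PRS2} together with the spectral description \eqref{eq:opissigma}. Concretely, I will show that if $\lambda_0 > 0$ does not belong to $\Lambda$, then $\ker \nabla^2_u \Phi(\tilde u_0, \lambda_0) \cap \bigoplus_{k=1}^{\infty} \bH_k = \{0\}$, which by the quoted necessary condition excludes $\Gamma(\tilde u_0) \times \{\lambda_0\}$ from being an orbit of local bifurcation.

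First I would use the block-diagonal structure of $\nabla^2_u \Phi(\tilde u_0, \lambda_0)$ with respect to the isotypical-like decomposition $\bH^1_{2\pi} = \overline{\bH_0 \oplus \bigoplus_{k \geq 1} \bH_k}$ furnished by Lemma 5.1.1 of \cite{FRR}, so that the intersection of the kernel with $\bH_k$ is determined by whether zero is an eigenvalue of the $\bH_k$-block. By \eqref{eq:opissigma}, the eigenvalues contributed by the block on $\bH_k$ are precisely the numbers
\[
\frac{k^2 - \lambda_0^2 \alpha}{k^2 + 1}, \qquad \alpha \in \sigma(\nabla^2 U(u_0)).
\]
Hence $\ker \nabla^2_u \Phi(\tilde u_0, \lambda_0) \cap \bH_k \neq \{0\}$ for some $k \geq 1$ if and only if there exist $k \in \bN$ and $\alpha \in \sigma(\nabla^2 U(u_0))$ with $k^2 = \lambda_0^2 \alpha$.

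The conclusion now follows by a short arithmetic observation: since $k \geq 1$ and $\lambda_0 > 0$, the equality $k^2 = \lambda_0^2 \alpha$ forces $\alpha > 0$, so that $\alpha \in \sigma^+(\nabla^2 U(u_0)) = \{\beta_1^2, \ldots, \beta_q^2\}$, i.e.\ $\alpha = \beta_j^2$ for some $j \in \{1, \ldots, q\}$. Taking positive square roots gives $\lambda_0 = k / \beta_j$ with $k \in \bN$ and $\beta_j \in \cB$, which is exactly the statement $\lambda_0 \in \Lambda$. Combining this with the necessary condition recalled above yields the fact.

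I do not expect a genuine obstacle here: the argument is essentially a bookkeeping step in which the spectral formula \eqref{eq:opissigma} is merely translated into a condition on $\lambda_0$. The only mild point of care is making explicit that the kernel contribution from $\bH_0$ (which corresponds to $k = 0$ and always contains the tangent space to $\Gamma(\tilde u_0)$) is irrelevant for the necessary condition, so that one is really reduced to the case $k \geq 1$ treated above.
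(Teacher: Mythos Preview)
Your argument is correct and is precisely the reasoning the paper intends: the Fact is stated without a separate proof, as an immediate consequence of the necessary condition from \cite{PRS2} combined with the spectral description \eqref{eq:opissigma}, and your proposal simply spells out that deduction. The only cosmetic remark is that the paper writes the necessary condition as ``$\neq \emptyset$'' whereas you (more accurately) write ``$\neq \{0\}$''; the paper's subsequent clarification ``i.e.\ if the kernel is not fully contained in $\bH_0$'' confirms that your reading is the intended one.
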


To obtain the sufficient condition  we use Izydorek's version of the Conley index, see  Appendix for the definition.
For $\lambda\notin\Lambda$ from Fact \ref{fact:necess} it follows that the orbit $\Gamma(\tilde{u}_0)$ is isolated in $(\nabla \Phi(\cdot, \lambda))^{-1}(0)$. Since the operator is a gradient one, this orbit is an isolated invariant set of the flow generated by $-\nabla_u \Phi(\cdot, \lambda),$ i.e. it is an isolated invariant set in the sense of the Conley index theory.
Hence the Conley index $CI_\Gamma(\Gamma(\tilde u_0), -\nabla_u \Phi(\cdot, \lambda))$ is well-defined for $\lambda\notin\Lambda$.
From the continuation property it follows that if there is a change of this index then there occurs a local bifurcation, see \cite{Izydorek}. More precisely, there holds the following theorem.

\begin{Theorem}\label{thm:loc}
Let $\lambda_0 \in \Lambda$. If $\lambda_-,\lambda_+$ are such that $[\lambda_-,\lambda_+]\cap\Lambda=\{\lambda_0\}$ and
\begin{equation}\label{eq:CI}
CI_\Gamma(\Gamma(\tilde u_0), -\nabla_u \Phi(\cdot, \lambda_-))\neq CI_\Gamma(\Gamma(\tilde u_0), -\nabla_u \Phi(\cdot, \lambda_+)),
\end{equation}
then $\Gamma(\tilde u_0) \times \{\lambda_0\}$ is an orbit of local bifurcation of solutions of \eqref{eqgrad}.
\end{Theorem}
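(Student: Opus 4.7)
The plan is to argue by contradiction using the continuation (homotopy invariance) property of Izydorek's infinite-dimensional equivariant Conley index. Suppose that $\Gamma(\tilde{u}_0)\times\{\lambda_0\}$ is not an orbit of local bifurcation of solutions of \eqref{eqgrad}. Then by definition there exist a $\Gamma$-invariant open neighborhood $\cU$ of $\Gamma(\tilde{u}_0)$ in $\bH^1_{2\pi}$ and a number $\delta>0$, with $[\lambda_0-\delta,\lambda_0+\delta]\subset[\lambda_-,\lambda_+]$, such that for every $\lambda\in[\lambda_0-\delta,\lambda_0+\delta]$ the only zeros of $\nabla_u\Phi(\cdot,\lambda)$ in $\cU$ constitute exactly the orbit $\Gamma(\tilde{u}_0)$.

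Next I would combine this with the necessary condition. Since by hypothesis $[\lambda_-,\lambda_+]\cap\Lambda=\{\lambda_0\}$, Fact \ref{fact:necess} implies that for every $\lambda\in[\lambda_-,\lambda_+]\setminus\{\lambda_0\}$ the orbit $\Gamma(\tilde{u}_0)$ is already isolated in $(\nabla_u\Phi(\cdot,\lambda))^{-1}(0)$. Together with the non-bifurcation assumption at $\lambda_0$ this lets me shrink $\cU$ to a common $\Gamma$-invariant isolating neighborhood $\cU_0\subset\cU$ so that $\Gamma(\tilde{u}_0)$ is a $\Gamma$-invariant isolated invariant set of the gradient flow generated by $-\nabla_u\Phi(\cdot,\lambda)$ for every $\lambda$ in the closed interval $[\lambda_-,\lambda_+]$, with isolating neighborhood $\cU_0$. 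Because the map $\lambda\mapsto\nabla_u\Phi(\cdot,\lambda)$ is continuous in $\lambda$ and is of the form identity minus a completely continuous $\Gamma$-equivariant field (the standard structure for the variational setting on $\bH^1_{2\pi}$), the family of flows satisfies the hypotheses required for Izydorek's continuation theorem.

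Applying the continuation property of the $\Gamma$-equivariant Conley index (see \cite{Izydorek}) to this one-parameter family over $[\lambda_-,\lambda_+]$ then yields
\[
CI_\Gamma(\Gamma(\tilde{u}_0),-\nabla_u\Phi(\cdot,\lambda_-))=CI_\Gamma(\Gamma(\tilde{u}_0),-\nabla_u\Phi(\cdot,\lambda_+)),
\]
which contradicts assumption \eqref{eq:CI}. Hence $\Gamma(\tilde{u}_0)\times\{\lambda_0\}$ must be an orbit of local bifurcation.

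The main obstacle, and essentially the only non-formal step, is the construction of a common $\Gamma$-invariant isolating neighborhood $\cU_0$ valid throughout $[\lambda_-,\lambda_+]$. Away from $\lambda_0$ this follows from Fact \ref{fact:necess}, and at $\lambda_0$ it is the negation-of-bifurcation hypothesis, but some care is needed to glue these local choices into one neighborhood that isolates $\Gamma(\tilde{u}_0)$ uniformly, using compactness of the orbit and continuous dependence of $\nabla_u\Phi$ on $\lambda$. Once that is in place, the conclusion is a direct invocation of the continuation property of the equivariant Conley index.
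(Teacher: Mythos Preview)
Your proposal is correct and is precisely the argument the paper has in mind: the paper does not spell out a proof of this theorem but simply states that it follows from the continuation property of Izydorek's equivariant Conley index (see the sentence immediately preceding the statement and the reference to \cite{Izydorek}). Your contradiction argument via a common isolating neighborhood over $[\lambda_-,\lambda_+]$ is exactly the standard way to unpack that reference, and the technical point you flag about gluing the local isolating neighborhoods is routine given compactness of $[\lambda_-,\lambda_+]$ and of the orbit.
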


\begin{Remark}\label{rem:nec1}
Note that from Remark \ref{rem:phase} and Fact \ref{fact:necess} it follows that the periods of solutions of \eqref{equ} in the bifurcating sequences have to satisfy $T_k \to 2 \pi \frac{k}{\beta},$ where $\frac{k}{\beta} \in \Lambda$. However, these periods do not have to be minimal. On the other hand, if we consider bifurcations from the orbits of the form $\Gamma(\tilde u_0) \times \{\frac{1}{\beta_{j_0}}\},$ where $\beta_{j_0}\in\cB$ satisfies the nonresonance condition $\beta_j \slash \beta_{j_0} \not \in \bN $ for $\beta_j\in\cB\setminus\{\beta_{j_0}\}$, then the obtained periods are minimal. Therefore to prove the assertions of Theorems \ref{main-theo1} and \ref{main-theo2} we restrict our attention to such orbits.
\end{Remark}

Now we are in a position to prove the main results of our article, namely Theorems \ref{main-theo1} and \ref{main-theo2}.

\subsection{Proof of Theorem \ref{main-theo1}}
To prove the assertion we study local bifurcations from the levels satisfying the necessary condition given in Fact \ref{fact:necess}. From Remark \ref{rem:nec1} it follows that we can restrict our attention to the set $\Lambda_0\subset \Lambda$, where
\begin{equation}\label{eq:Lambda0}
\Lambda_0=\left\{\frac{1}{\beta_{j_0}}\colon \beta_{j_0} \in\cB \text{ and }\frac{\beta_j}{\beta_{j_0}} \not \in \bN \text{ for }\beta_j\in\cB\setminus\{\beta_{j_0}\}\right\}.
\end{equation}
Obviously, if the assumptions of Theorem \ref{main-theo1} are satisfied, this set is nonempty.

Fix $\lambda_0\in\Lambda_0$. By the definition of $\Lambda$, there exists $\varepsilon>0$ such that $[\lambda_0-\varepsilon,\lambda_0+\varepsilon]\cap\Lambda=\{\lambda_0\}.$
Put $\lambda_{\pm} = \lambda_0\pm \varepsilon$. Then, since $\lambda_{\pm}\notin\Lambda$,
the Conley indices $CI_\Gamma(\Gamma(\tilde{u}_0), -\nabla_u \Phi(\cdot, \lambda_{\pm}))$ are well-defined.

By Theorem \ref{thm:loc}, to prove our assertion it is enough to show that the inequality \eqref{eq:CI} holds.
Note that $\nabla\Phi(\cdot,\lambda_\pm)$ are of the form of completely continuous perturbations of the identity and therefore completely continuous perturbations of linear, bounded, Fredholm, self adjoint operators $\nabla^2_u\Phi(\tilde u_0,\lambda_\pm)$. From the definition of the equivariant Conley index, $CI_{\Gamma}\left(\Gamma(\tilde u_0),-\nabla_u\Phi(\cdot,\lambda_\pm)\right)$ are the $\Gamma$-homotopy types of $\Gamma$-spectra of the types $(\bH_{n+1, \pm}^+)^\infty_{n=0}$, where $\bH_{n+1,\pm}^+$ are the direct sums of eigenspaces of $-\nabla^2_u\Phi(\tilde u_0,\lambda_\pm)_{|\bH_{n+1}}$ corresponding to the positive eigenvalues.
Since $\nabla^2_u\Phi(\tilde u_0,\lambda_\pm)=Id-\cL_\pm$ with $\cL_\pm$ being linear, compact, self adjoint and $\Gamma$-equivariant operators,
by the spectral theorem for compact operators it follows that the eigenvalues of $-\nabla^2_u\Phi(\tilde u_0,\lambda_\pm)$ converge to $-1$.
Consequently, for $n$ sufficiently large, $\bH_{n+1,\pm}^+=\emptyset$ and the sequences of $\Gamma$-CW-complexes $CI_\Gamma(\Gamma(\tilde{u}_0),-\nabla_u\Phi^n(\cdot,\lambda_\pm))$ stabilize.
Therefore, the condition \eqref{eq:CI} is equivalent to
\begin{equation}\label{eq:CIsk}
CI_\Gamma(\Gamma(\tilde u_0), -\nabla_u\Phi^n(\cdot,\lambda_-)) \neq CI_\Gamma(\Gamma(\tilde u_0),-\nabla_u\Phi^n(\cdot,\lambda_+))
\end{equation}
for $n$ sufficiently large. In the following we assume that such $n$ is fixed.

Consider $T_{u_0}^{\perp}\Gamma(u_0)$ - the space normal to the orbit $\Gamma(u_0)$ at $u_0$ in $\bR^n$. Identifying $\bH_0$ with $\bR^n$, we write
$\bW^n=T_{u_0}^{\perp}\Gamma(u_0)\oplus \bigoplus_{k=1}^{n} \bH_k$ for the normal space to $\Gamma(\tilde u_0)$ in $\bigoplus_{k=0}^{n} \bH_k$. Put
$\Psi^n_{\pm}=\Phi^n_{|\bW^n}(\cdot,\lambda_{\pm})\colon\bW^n\to\bR$.
It is known that the space $\bW^n$ and the functionals $\Psi^n_{\pm}$ are $H$-invariant, where $H=\Gamma_{u_0}$. Moreover, the set $\{\tilde u_0\}$ is isolated in $(\nabla\Psi^n_{\pm})^{-1}(0)$, and so $\{\tilde u_0\}$ is an isolated invariant set in the sense of the Conley index. Therefore, the Conley indices $CI_H(\{\tilde u_0\}, -\nabla\Psi^n_{\pm})$ are well-defined.
Furthermore, by Theorem \ref{thm:CIsmash} we have
\begin{equation}\label{equality1}
CI_\Gamma(\Gamma(\tilde u_0), -\nabla_u\Phi^n(\cdot,\lambda_{\pm}))=\Gamma^+ \wedge_{H}CI_H(\{\tilde u_0\}, -\nabla\Psi^n_{\pm}).
\end{equation}
Therefore to study inequality \eqref{eq:CIsk} we will first investigate the indices $CI_H(\{\tilde u_0\}, -\nabla\Psi^n_{\pm})$ and then apply the abstract results from the previous section.

Since $\tilde u_0$ is a non-degenerate critical point of  $\Psi^n_{\pm},$  we can apply Remark \ref{rem:nondegenerate}, obtaining that these indices are the homotopy types of $S^{(\bW^n)^+},$ where $(\bW^n)^+$ is the direct sum of eigenspaces of $-\nabla^2 \Psi^n_{\pm}(\tilde{u}_0)$ corresponding to the positive eigenvalues. Therefore we study the spectral decompositions of $\bW^n$ given by the isomorphisms $-\nabla^2 \Psi^n_{\pm}(\tilde{u}_0).$

Note that for both operators, $\bW^n$ can be decomposed as $$\bW^n=\bH_1 \oplus (T_{u_0}^\bot  \Gamma(u_0) \oplus \bigoplus_{k=2}^{n} \bH_k).$$ Moreover, it is easy to see that the description of the spectrum of $-\nabla^2 \Psi^n_{\pm}(\tilde{u}_0)$ is  as in formula \eqref{eq:opissigma}.
Hence, to obtain the spectral decomposition, we have to study signs of the numbers $k^2-\lambda_{\pm}^2\beta_j^2$ for $k \in \bN, \beta_j \in \cB$. Consider $k^2-\lambda^2 \beta_j^2$, for $k$ and $\beta_j$ fixed, as a continuous function of $\lambda \in [\lambda_-, \lambda_+]$ and note that from formula \eqref{eq:opissigma} it is nonzero for $\lambda \neq \frac{1}{\beta_{j_0}}$. If $k>1$ and $\beta_j \in \cB$, from the nonresonance condition it follows that the value $0$ is not attained, hence this function has a constant sign. Therefore the spectral decomposition of $\bH_k$ with $k>1$ does not depend on the choice of $\lambda_{\pm}.$
Obviously, also a decomposition of $T_{u_0}^\bot  \Gamma(u_0)$ is independent of such a choice. We denote by $\bW^-$ (respectively $\bW^{+}$) the direct sum of the eigenspaces corresponding to the negative (respectively positive) eigenvalues of
$-\nabla^2 \Psi^{n}_{\pm}(\tilde u_0)_{|\bW^n \ominus \bH_1}.$

What is left is to consider the spectral decomposition of $\bH_1$. In this case the zero value can be attained only for $\beta_j=\beta_{j_0}, \lambda=\frac{1}{\beta_{j_0}}.$ Moreover, when passing $\lambda=\frac{1}{\beta_{j_0}}$ the function $1-\lambda^2\beta_{j_0}^2$ changes its sign. Therefore
\begin{equation}\label{eq:morse}
m^-(-\nabla^2\Psi_{+}(\tilde{u}_0)_{|\bH_1}) \neq m^-(-\nabla^2\Psi_{-}(\tilde{u}_0)_{|\bH_1}),
\end{equation}
where $m^-$ denotes the Morse index.
Observe that the eigenspaces of $-\nabla^2\Psi_{\pm}(\tilde{u}_0)_{|\bH_1}$ are the same as the ones of $-\nabla^2_u\Phi(\tilde{u}_0, \lambda_{\pm})_{|\bH_1}$, so the negative and positive eigenspaces of $-\nabla^2\Psi_{\pm}(\tilde{u}_0)_{|\bH_1}$ are $\bH^-_{1,\pm}$ and $\bH^+_{1,\pm}$ respectively.
Finally, the spectral decompositions of $\bW^n$ given by the isomorphisms $-\nabla^2 \Psi^{n}_{-}(\tilde u_0)$ and $-\nabla^2 \Psi^{n}_{+}(\tilde u_0)$  are of the form
\begin{equation*}
\ds \bW^{n}=(\bH_{1,-}^-\oplus\bH_{1,-}^+)\oplus (\bW^-\oplus\bW^+),
\end{equation*}
\begin{equation*}
\ds \bW^{n}=(\bH_{1,+}^-\oplus\bH_{1,+}^+)\oplus (\bW^-\oplus\bW^+)
\end{equation*}
respectively.
Summing up,
\begin{equation} \label{equality3}
CI_H(\{ \tilde u_0\}, -\nabla\Psi^n_{\pm})=S^{\bH^+_{1,\pm}\oplus\bW^+}.
\end{equation}
Applying formulas \eqref{equality1} and \eqref{equality3} we have
\[CI_\Gamma(\Gamma(\tilde u_0), -\nabla_u\Phi^n(\cdot,\lambda_\pm))=\Gamma^+ \wedge_{H} S^{\bH^+_{1,\pm}\oplus\bW^+} ,\]
and so to finish the proof of Theorem \ref{main-theo1} it is enough to show that
\begin{equation}\label{inequality3}
\Gamma^+ \wedge_{H} S^{\bH^+_{1,-}\oplus\bW^+} \not\approx_{\Gamma} \Gamma^+ \wedge_{H} S^{\bH^+_{1,+}\oplus\bW^+}.
\end{equation}
From \eqref{eq:morse}, $\dim \bH^+_{1,-} \neq \dim\bH^+_{1,+}$. Therefore inequality \eqref{inequality3} is a consequence of Theorem \ref{thm:general}, which completes the proof of  Theorem \ref{main-theo1}.

\subsection{Proof of Theorem \ref{main-theo2}}
Throughout this proof we use the notation of the one of Theorem \ref{main-theo1}. As in this proof, to show the assertion we will study local bifurcations from the orbit $\Gamma(\tilde{u}_0) \times \{\lambda_0\}$ for a fixed $\lambda_0\in\Lambda_0$, where $\Lambda_0$ is defined by \eqref{eq:Lambda0}.
To this end we will apply an equivariant version of the so called splitting lemma, see for instance Lemma 3.2 of \cite{FRR}. We first shift the critical point $\tilde u_0$ to the origin, i.e.
we consider $H$-invariant maps $\Pi_{\pm}^n\colon \bW^n\to\bR$ given by $\Pi_{\pm}^n(u)=\Psi^n_{\pm}(u+\tilde u_0)$. Then
$$CI_H(\{ \tilde u_0\}, -\nabla\Psi^n_{\pm})=CI_H(\{0\}, -\nabla\Pi^n_{\pm}).$$

It is easy to observe that $\ker\nabla^2\Pi_{-}^n(0)=\ker\nabla^2\Pi_{+}^n(0)$ and $\im\nabla^2\Pi_{-}^n(0)=\im\nabla^2\Pi_{+}^n(0)$, for simplicity we denote these spaces respectively by $\cN$ and $\cR$.
Put $\cA_{\pm}=(\nabla^2\Pi_{\pm}^n(0))_{|\cR}$ and note that $\cA_{\pm}$ are isomorphisms.
Applying the splitting lemma we get $H$-invariant maps $\varphi_{\pm} \colon \cN \to \bR$ and $H$-invariant homotopies between $\nabla\Pi^n_{\pm}$ and $(\nabla \varphi_{\pm},\cA_{\pm})$, with $\{0\}$ being an isolated invariant set at all levels of these homotopies.
Applying the continuation property of the Conley index we obtain
$$
CI_H(\{0\},-\nabla\Pi^n_{\pm})=CI_H(\{0\},(-\nabla\varphi_{\pm},-\cA_{\pm})).
$$
Reasoning as in the proof of Lemma 3.3.1 of \cite{PRS2} one can prove that $0\in\cN$ is an isolated local maximum of $\varphi_{\pm}$.
Hence, since $\cA_{\pm}$ are isomorphisms, we can apply the product formula (see Theorem \ref{thm:CIproduct}) and obtain
\begin{equation}\label{eq:splitted}
CI_H(\{0\},(-\nabla\varphi_{\pm},-\cA_{\pm}))=CI_H(\{0\},-\nabla\varphi_{\pm})\wedge CI_H(\{0\},-\cA_{\pm}).
\end{equation}
Moreover, reasoning as in the proof of Lemma 2.2 of \cite{KPR}, we get
$$ CI_H(\{0\},-\nabla\varphi_{\pm})=S^{\cN}.$$
To find the latter factor in \eqref{eq:splitted} consider $\bV=\bigoplus_{k=2}^{n} \bH_k$ and the subspace $\bV^+$ being the direct sum of the eigenspaces of $-(\cA_{\pm})_{|\bV}$ corresponding to the positive eigenvalues. Then, as in the proof of Theorem \ref{main-theo1},
$$
CI_H(\{0\},-\cA_{\pm})=S^{\bV^+\oplus\bH^+_{1,\pm}}
$$
and $\dim \bH^+_{1,-} \neq \dim\bH^+_{1,+}$. Hence
$$CI_H(\{0\},-\nabla\Pi^n_{\pm})= S^{\cN}\wedge S^{\bV^+\oplus\bH^+_{1,\pm}}= S^{\cN\oplus \bV^+\oplus\bH^+_{1,\pm} }.
$$
Since $H\in\sub(S^1)$ and $\dim\bH^+_{1,-}\neq \dim\bH^+_{1,+}$, by Theorem \ref{thm:general} we obtain
$$
\Gamma^+ \wedge_{H} (S^{\cN\oplus\bV^+\oplus\bH^+_{1,-}}) \not\approx_{\Gamma} \Gamma^+ \wedge_{H} (S^{\cN\oplus\bV^+\oplus\bH^+_{1,+}}).
$$
Applying the equality \eqref{equality1} and Theorem \ref{thm:loc} we finish the proof.

\subsection{Final remarks and open questions}\label{sec:finalremarks}
We finish this paper with some remarks.

\begin{Remark}
Taking into consideration the assumption (1) of Theorems \ref{main-theo1} and \ref{main-theo2} the following question seems to be interesting: are the counterparts of these theorems true for $\Gamma_{u_0}$ being any closed subgroup of the Lie group $\Gamma$?
As far as we know,
this question is at present far from being solved.
\end{Remark}

\begin{Remark}
In the assertions of our theorems we obtain the existence of sequences of non-stationary periodic solutions of system \eqref{equ} emanating from the orbit. An interesting question is as follows: under the assumptions of these theorems, does it emanate a connected set of such solutions? It seems that, given the relationship between the equivariant Conley index and the degree for equivariant gradient maps, see \cite{golry}, we should be able to positively answer this question.
 
 Note that the emanation of connected sets can be obtained also with the use of the theory of degree for equivariant gradient maps (see for example \cite{BKS}, \cite{GolRyb1}). The theory of the index of an orbit defined via the degree has been developed in \cite{GolKluSte2}, \cite{GolSte}. 
\end{Remark}

\begin{Remark}\label{rem:onlygamma}
It is well-known  that $\h^1_{2\pi}$ is an orthogonal representation of the group $S^1$ with the action given by shift in time. Therefore, with the action of $\Gamma$ described above, it can also be considered as a $(\Gamma\times S^1)$-representation. However, in our paper we study bifurcations from orbits of constant solutions. In such a case the $(\Gamma\times S^1)$-orbit is the same as the $\Gamma$-orbit, i.e. $(\Gamma \times S^1)(\tilde{u}_0) \approx \Gamma(\tilde{u}_0)$.
Using our method and considering additionally the $S^1$-action one cannot obtain any additional information.
Hence, for simplicity of computation, we restrict our attention to the action of  the group~$\Gamma.$
\end{Remark}




\section{Appendix}\label{sec:appendix}

\subsection{Representations of $S^1$ and $\bZ_m$.}
In this subsection we give a description of finite-dimensional orthogonal real representations of $S^1$ and $\mathbb{Z}_m$ and recall the definitions of the two topological objects which we use to prove our abstract results.

Let $l\in\mathbb{N}$ and consider a two-dimensional $S^1$-representation (denoted by $\mathbb{R}[1,l]$) with the action of the group $S^1$ given by
$(e^{i\phi},(x,y))\mapsto \Phi(\phi)^l(x,y)^T=\Phi(l\cdot \phi)(x,y)^T,$ where
\[
\Phi(\phi)=
\left[\begin{array}{cc}
\cos\phi & -\sin\phi \\
\sin\phi & \cos\phi\end{array}\right].
\]
For $k,l\in\mathbb{N}$ we will denote by $\mathbb{R}[k,l]$ the direct sum of $k$ copies of $\mathbb{R}[1,l]$ and additionally by $\mathbb{R}[k,0]$ the trivial $k$-dimensional $S^1$-representation.
It is known that if $\mathbb{V}$ is a finite-dimensional orthogonal $S^1$-representation, then there exist finite sequences $(k_i),(m_i)$
such that $k_0\in\mathbb{N}\cup\{0\}$, $k_i,m_i\in\mathbb{N}$ and $\mathbb{V}$ is equivalent to
\begin{equation}\label{eq:repS1}
 \bR[k_0,0]\oplus\bR[k_1,m_1] \oplus \ldots \oplus \bR[k_p,m_p],
\end{equation}
see \cite{Adams}.

Analogously, when no confusion can arise, for $l\in\{1,\ldots,m-1\}$ we denote by $\bR[1,l]$ the two-dimensional irreducible representation of the finite cyclic group $\bZ_m$ with the action $(e^{i\frac{2\pi k}{m}},(x,y))\mapsto \Phi(\frac{2\pi k}{m})^l(x,y)^T$ and by $\mathbb{R}[k,0]$ the trivial $k$-dimensional $\bZ_m$-representation. As before, $\mathbb{R}[k,l]$ is the direct sum of $k$ copies of $\mathbb{R}[1,l]$. It is known that if $\bV$ is a finite-dimensional orthogonal $\bZ_m$-representation, then $\bV$ is equivalent to
\begin{equation}\label{eq:repZm}
 \bR[k_0,0]\oplus\bR[k_1,m_1] \oplus \ldots \oplus \bR[k_p,m_p],
\end{equation}
where $k_0\in\bN\cup\{0\}$, $k_1,\ldots, k_p\in\bN$, $m_1,\ldots,m_p \in \{1,\ldots,m-1\}$, see \cite{Serre}.

If now $\bV$ is an $S^1$-representation without non-trivial fixed points, i.e. such that $k_0=0$ in \eqref{eq:repS1}, then the orbit space $S(\bV)/S^1$ of the action of $S^1$ on the sphere $S(\bV)$ is called the twisted projective space. Similarly if $\bV$ is a $\bZ_m$-representation such that $k_0=0$ in \eqref{eq:repZm}, then the orbit space $S(\bV)/\bZ_m$ is called the lens complex.

Note that we use in this article results of Kawasaki from \cite{kawasaki} and therefore the above terminology is taken from this paper. However, these topological spaces are also known by different names - the twisted projective space is called the weighted projective space and the lens complex is known as the weighted lens space.

\subsection{Equivariant Conley index}
In this subsection we introduce the basic notion of the equivariant Conley index. We start with the finite-dimensional case, for a more complete exposition we refer to \cite{bartsch}, \cite{Geba}.

As before, we denote by $\Gamma$ a compact Lie group. Let $\bV$ be a real finite-dimensional orthogonal $\Gamma$-representation and $\varphi\colon\bV\to\bR$  a $\Gamma$-invariant map of class $C^2$.
Suppose that $S$ is an isolated invariant set of the flow generated by $-\nabla \varphi$. In such a situation the Conley index of $S$ is defined (see \cite{bartsch}, \cite{Geba}) as the  $\Gamma$-homotopy type of a pointed $\Gamma$-CW-complex. We denote it by $CI_\Gamma(S, -\nabla \varphi)$.

The equivariant Conley index has all the properties of the classical (non-equivariant) index given in \cite{Conley}. For the convenience of the reader we recall some of them, particularly important in the proofs of our results. We start with the product formula.

\begin{Theorem}\label{thm:CIproduct} Let $S_i$, for $i=1,2$, be isolated $\Gamma$-invariant sets for the $\Gamma$-flows generated by $-\nabla\varphi_i\colon\bV_i\to\bV_i$ which are $\Gamma$-equivariant maps of class $C^1$ and $\mathbb{V}_i$ are real finite-dimensional orthogonal $\Gamma$-representations. Then $$CI_\Gamma(S_1\times S_2, (-\nabla\varphi_1,-\nabla\varphi_2))= CI_\Gamma(S_1, -\nabla\varphi_1)\wedge CI_\Gamma(S_2, -\nabla\varphi_2).$$
\end{Theorem}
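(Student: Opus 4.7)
The plan is to prove the product formula by the classical route via index pairs, carried out $\Gamma$-equivariantly. The diagonal $\Gamma$-action on $\bV_1\oplus\bV_2$ makes the product flow $\eta=(\eta_1,\eta_2)$ generated by $(-\nabla\varphi_1,-\nabla\varphi_2)$ a $\Gamma$-equivariant gradient flow, and $S_1\times S_2$ is $\Gamma$-invariant and isolated for $\eta$ because each factor is, so the left-hand Conley index is well defined. The strategy is to take $\Gamma$-invariant index pairs $(N_i,L_i)$ for $S_i$ and show that
\[(N,L)\;:=\;\bigl(N_1\times N_2,\;(L_1\times N_2)\cup(N_1\times L_2)\bigr)\]
is a $\Gamma$-invariant index pair for $S_1\times S_2$ with respect to $\eta$.

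First I would invoke the existence of $\Gamma$-invariant index pairs in the finite-dimensional equivariant setting, which is classical and available in \cite{bartsch} and \cite{Geba}; this yields pairs $(N_i,L_i)$ with $CI_\Gamma(S_i,-\nabla\varphi_i)$ equal to the pointed $\Gamma$-homotopy type of $N_i/L_i$. Next I would verify the three defining properties of an index pair for $(N,L)$: first, that $S_1\times S_2$ is the maximal invariant set in $\overline{N\setminus L}$; second, that $L$ is positively invariant in $N$; and third, that $L$ is the exit set of $N$. Each of these reduces factorwise to the corresponding property of $(N_i,L_i)$, since $\eta$ acts coordinatewise and each coordinate trajectory respects the flow $\eta_i$. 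The resulting equivariant Conley index is then the pointed $\Gamma$-homotopy type of $N/L$, and a standard point-set argument gives the pointed $\Gamma$-homeomorphism
\[\frac{N_1\times N_2}{(L_1\times N_2)\cup(N_1\times L_2)}\;\cong_{\Gamma}\;\frac{N_1}{L_1}\wedge\frac{N_2}{L_2},\]
with the diagonal $\Gamma$-action being preserved by the natural quotient maps. Combining these identifications yields the claimed formula.

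The main obstacle I expect is property three, the identification of $L$ with the exit set, because in a product flow a trajectory could in principle approach $\partial N$ at a corner where both coordinates meet $\partial N_i$ simultaneously, and one needs to confirm that such exits are still recorded in $L$. This however follows from the design of $L$, since $L$ contains every point whose first or second coordinate lies in $L_i$, so any exit of $\eta$ from $N$ necessarily passes through $L$. The only other delicate point is that the $\Gamma$-invariance of $(N_i,L_i)$ must be chosen compatibly so that the diagonal action on $N_1\times N_2$ preserves $L$; this is automatic from the diagonal action, but is worth recording. Once these factorwise statements are in hand, the remaining smash identification is formal and the theorem follows.
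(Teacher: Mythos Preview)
The paper does not actually prove this theorem: it is stated in the Appendix as a recalled property of the equivariant Conley index, with references to \cite{Conley}, \cite{bartsch}, and \cite{Geba} for the general theory, but no argument is given. Your proposal therefore supplies a proof where the paper offers none.

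The route you outline is the standard one and is essentially correct. A couple of technical points are worth tightening. First, for the product pair $(N,L)$ to satisfy the exit-set axiom cleanly, one usually takes the $(N_i,L_i)$ to be \emph{regular} index pairs (so that the exit-time functions are continuous); otherwise the verification of axiom three for the product can run into the corner issue you flag, not because the exit fails to land in $L$, but because positive invariance of $L$ in $N$ and the ``every point leaving $N$ first enters $L$'' condition interact in a way that requires continuity of exit times to handle cleanly. Regular equivariant index pairs exist (this is in \cite{bartsch} and \cite{Geba}), so this is only a matter of stating the hypothesis. Second, the identification $N/L \cong_\Gamma (N_1/L_1)\wedge(N_2/L_2)$ is a pointed $\Gamma$-homeomorphism on the nose, and the diagonal action descends as you say; this step is indeed formal once the index pair is in hand.
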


In the case of an isolated invariant set containing only a non-degenerate critical point the Conley index has a relatively simple structure. Namely, analogously as in the non-equivariant case (see \cite{SmoWas}), we have the following:

\begin{Remark}\label{rem:nondegenerate}
Suppose that $\varphi\colon \bV\to\bR$ is a $\Gamma$-invariant map of class $C^2$ and suppose that $x_0 \in\bV$ is an isolated non-degenerate critical point of $\varphi$. Then $CI_\Gamma(\{x_0\}, -\nabla \varphi)$ is the $\Gamma$-homotopy type of $S^{\bV^+}$, where $\bV^+$ is the direct sum of eigenspaces of $-\nabla^2 \varphi(x_0)$ corresponding to the positive eigenvalues.
\end{Remark}

In a more general case of the isolated invariant set containing an isolated critical orbit, there is a relation between the $\Gamma$-indices of this orbit and a critical point of the restriction, see Theorem 2.4.2 of \cite{PRS2}. The relation is given in terms of the smash product over the stabilizer of this critical point. For the convenience of the reader we recall it in the next theorem.

\begin{Theorem}\label{thm:CIsmash}
Suppose that $\varphi\colon \bV\to\bR$ is a $\Gamma$-invariant map of class $C^2$ and the orbit $\Gamma(x_0) \subset (\nabla \varphi)^{-1}(0)$ is isolated. Define $\psi=\varphi_{|T_{x_0}^{\perp}\Gamma(x_0)}$. Then
$$CI_\Gamma(\Gamma(x_0), - \nabla \varphi)=\Gamma^+ \wedge_{\Gamma_{x_0}} CI_{\Gamma_{x_0}}(\{x_0\},-\nabla \psi).$$
\end{Theorem}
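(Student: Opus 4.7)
\textbf{Proof plan for Theorem \ref{thm:CIsmash}.} The strategy is to combine the linear slice theorem for compact Lie group actions with the product-like behaviour of $-\nabla\varphi$ in directions normal versus tangent to the orbit. First, by the slice theorem for the orthogonal action of $\Gamma$ on $\bV,$ a $\Gamma$-invariant tubular neighborhood $U$ of $\Gamma(x_0)$ is $\Gamma$-equivariantly diffeomorphic to the twisted product $\Gamma \times_{\Gamma_{x_0}} \Omega,$ where $\Omega \subset x_0 + T_{x_0}^{\perp}\Gamma(x_0)$ is a $\Gamma_{x_0}$-invariant open neighborhood of $x_0.$ Since $\varphi$ is $\Gamma$-invariant, $\nabla\varphi$ is $\Gamma$-equivariant; invariance of $\varphi$ along orbits forces $\nabla\varphi(x)$ to lie in $T_x^{\perp}\Gamma(x)$ at every point, so under the slice identification one has $\nabla\varphi(\gamma y) = \gamma\,\nabla\psi(y)$ for $y \in \Omega$ and $\gamma \in \Gamma.$

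Consequently the local flow $\eta_t$ generated by $-\nabla\varphi$ on $U$ is conjugate, via the slice diffeomorphism, to the twisted flow $[\gamma,y] \mapsto [\gamma,\phi_t(y)],$ where $\phi_t$ is the local flow of $-\nabla\psi$ on $\Omega.$ The isolation hypothesis $\Gamma(x_0) \subset (\nabla\varphi)^{-1}(0)$ translates, via the conjugation, into $\{x_0\}$ being isolated in $(\nabla\psi)^{-1}(0),$ hence an isolated $\Gamma_{x_0}$-invariant set for $\phi_t.$ By the standard construction of the equivariant Conley index (see \cite{Geba}, \cite{bartsch}) one can choose a $\Gamma_{x_0}$-equivariant index pair $(N,L)$ for $\{x_0\}$ with $N \subset \Omega.$ Define $(\tilde N,\tilde L) := (\Gamma \times_{\Gamma_{x_0}} N,\; \Gamma \times_{\Gamma_{x_0}} L) \subset U.$

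The next step is to verify that $(\tilde N,\tilde L)$ is a $\Gamma$-equivariant index pair for $\Gamma(x_0)$ with respect to the flow $\eta_t.$ All three defining conditions (isolation of $\Gamma(x_0) = \Gamma \times_{\Gamma_{x_0}} \{x_0\}$ inside $\tilde N,$ positive invariance of $\tilde L$ in $\tilde N,$ and the exit set property) follow pointwise from the corresponding properties of $(N,L)$ for $\phi_t,$ because $\eta_t$ acts only on the slice coordinate. Then the pointed quotient is computed using the fact that $\Gamma \times_{\Gamma_{x_0}}(-)$ commutes with collapses in the second argument and that $\Gamma \times_{\Gamma_{x_0}} L$ collapses to a single $\Gamma$-fixed base point: analogously to the identification \eqref{eq:Gsmashtosmash2} of Lemma \ref{lem:Gsmashtosmash} one obtains
$$\tilde N / \tilde L \;\cong\; \Gamma^+ \wedge_{\Gamma_{x_0}} (N/L),$$
and passing to $\Gamma$-homotopy types yields the claimed equality of Conley indices.

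\textbf{Main obstacle.} The formal identification of the quotient is easy once the setup is in place; the genuine work lies in the middle step, namely arranging $(N,L)$ inside the slice so that its twisted product is a \emph{bona fide} $\Gamma$-index pair for the \emph{global} flow of $-\nabla\varphi,$ and not merely for its restriction to $U.$ This amounts to shrinking $N$ enough that no trajectory of $\eta_t$ starting in $\tilde N$ leaves $U$ before crossing $\tilde L,$ or equivalently replacing $-\nabla\varphi$ by a $\Gamma$-equivariant cut-off vector field coinciding with it on a neighborhood of $\Gamma(x_0)$ and using the continuation property of the equivariant Conley index. Handling this carefully, together with the construction of $\Gamma_{x_0}$-equivariant index pairs, is precisely what is carried out in Theorem 2.4.2 of \cite{PRS2}.
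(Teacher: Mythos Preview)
The paper does not prove this theorem at all; it is stated in the Appendix purely as a fact recalled from Theorem 2.4.2 of \cite{PRS2}, with no argument supplied. Your sketch follows the slice-theorem route that \cite{PRS2} indeed uses, and you correctly identify both the mechanism (twisted-product index pairs, then the identification $\tilde N/\tilde L\cong\Gamma^+\wedge_{\Gamma_{x_0}}(N/L)$) and the only genuine technical issue (making the twisted pair an index pair for the globally defined flow).

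One point to tighten: the claim that under the slice identification $\nabla\varphi(\gamma y)=\gamma\,\nabla\psi(y)$ is not literally correct away from $x_0$. For $y$ in the affine slice, $\nabla\varphi(y)$ lies in $T_y^{\perp}\Gamma(y)$, which in general differs from $T_{x_0}^{\perp}\Gamma(x_0)$, while $\nabla\psi(y)$ is the orthogonal projection of $\nabla\varphi(y)$ onto the latter. Hence the flow of $-\nabla\varphi$ on $U$ is not exactly conjugate to the twisted flow of $-\nabla\psi$; the two agree only to first order along the orbit. This is precisely why the argument cannot be a direct conjugacy of flows and why one needs either the cut-off/continuation step you mention at the end, or (equivalently) a homotopy between $-\nabla\varphi$ and the genuinely twisted vector field keeping $\Gamma(x_0)$ isolated throughout. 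With that correction, your outline is sound and matches the cited source.
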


Let us now consider the infinite-dimensional case. First we recall the notion of a $\Gamma$-spectrum.
Let $\xi=(\mathbb{V}_n)^\infty_{n=0}$ be a sequence of finite-dimensional orthogonal representations of the group $\Gamma.$
The pair $\mathcal{E}=((\mathcal{E}_n)_{n=n(\mathcal{E})}^{\infty},(\epsilon_n)_{n=n(\mathcal{E})}^{\infty})$ is called a $\Gamma$-spectrum of the type $\xi$ if, for every $n\geq n(\mathcal{E})$,
$\mathcal{E}_{n}$ is a finite pointed $\Gamma$-CW-complex, $\epsilon_n\colon S^{\mathbb{V}_n}\wedge \mathcal{E}_n\rightarrow \mathcal{E}_{n+1}$ is a morphism  and there exists $n_0>n(\mathcal{E})$ such that $\epsilon_n$ is a $\Gamma$-homotopy equivalence for $n\geq n_0.$

The equivariant Conley index in the infinite-dimensional situation has been defined by Izydorek, see \cite{Izydorek}, as the $\Gamma$-homotopy type of a $\Gamma$-spectrum.
Izydorek's definition is given for a general $\cL\cS$-flow. Since in our paper we do not need the equivariant Conley index in the general case, we will briefly sketch the definition in the simplified situation, appropriate in our applications.

Consider an infinite-dimensional separable Hilbert space $\bH$ which is an orthogonal representation of $\Gamma$.
Let $\Phi\colon\bH\rightarrow\bR$ be a functional of the form $\Phi(u)=\frac12\langle Lu,u\rangle - K(u)$ such that $L$ is a linear, bounded, self adjoint, Fredholm and $\Gamma$-equivariant operator and $\nabla K$ is a $\Gamma$-equivariant completely continuous operator of class $C^1$.

Suppose that $\bH = \overline{ \bigoplus_{k=0}^{\infty} \h_k},$ where $\h_k$ are disjoint orthogonal finite-dimensional representations of $\Gamma$ such that $\bH_0=\ker L$ and $L(\bH_k)=\bH_k$ for every $k$.
Suppose that $S$ is an isolated invariant set of the flow generated by $-\nabla\Phi$ and put  $\bH^n=\bigoplus_{k=0}^{n} \h_k$.
If $\mathcal{O}$ is an isolating $\Gamma$-neighborhood of $S$, then $\mathcal{O}\cap \bH^n$ is an isolating $\Gamma$-neighborhood for the flow generated by $-\nabla\Phi_{|\bH^{n}}$ for $n$ sufficiently large.
Denote by $S_n$ the maximal invariant subset in $\mathcal{O}\cap \bH^n$ and consider $\mathcal{E}_n=CI_{\Gamma}(S_n,-\nabla\Phi_{|\bH^{n}})$.
Then the Conley index of $S$, denoted as in the finite-dimensional case by  $CI_\Gamma(S, -\nabla\Phi)$, is defined as the $\Gamma$-homotopy type of a $\Gamma$-spectrum $(\mathcal{E}_n,\epsilon_n)$ of the type $(\bH_{n+1}^+)^\infty_{n=0}$, where $\bH_{n+1}^+$ is the direct sum of eigenspaces of $-L_{|\bH_{n+1}}$ corresponding to the positive eigenvalues and $\epsilon_n\colon S^{\bH_{n+1}^+}\wedge \mathcal{E}_n\rightarrow \mathcal{E}_{n+1}$ are some morphisms obtained via the product formula given in Theorem \ref{thm:CIproduct}.


\begin{thebibliography}{99}
\bibitem{Adams} J. F. Adams, \textit{Lectures on Lie groups}, W. A. Benjamin Inc., New York-Amsterdam , 1969.
\bibitem{BKS}  Z. Balanov, W. Krawcewicz, H. Steinlein, \textit{Applied Equivariant Degree}, AIMS Series on Differential Equations \&  Dynamical Systems, Vol. 1, Springfield, 2006.
\bibitem{bartsch} T. Bartsch, \emph{Topological methods for variational problems with symmetries}, \rm Lecture Notes in Mathematics 1560, Springer-Verlag, Berlin, 1993.
\bibitem{bartsch1} T. Bartsch, {\em A generalization of the Weinstein-Moser theorems on periodic orbits of a Hamiltonian system near an equilibrium},  Ann. Inst. H. Poincar\'e Anal. Non Lin\'eaire 14(6) (1997), 691--718.
\bibitem{danryb} E. N. Dancer, S. Rybicki, {\em  A note on periodic solutions  of autonomous Hamiltonian systems
emanating from degenerate stationary solutions}, Differential Integral Equations 12(2) (1999).
\bibitem{dieck} T. tom Dieck, \emph{Transformation groups}, Walter de Gruyter \& Co., Berlin, 1987.
\bibitem{Conley} C. Conley, \textit{Isolated invariants sets and the Morse index}, CBMS Regional Conference Series in Mathematics 38, American Mathematical Society, Providence, R. I., 1978.
\bibitem{fadrab} E. Fadell, P.H. Rabinowitz, \textit{Generalized cohomological index theories for Lie group actions with an application to bifurcation questions for Hamiltonian systems},  Invent. Math. 45(2) (1978), 139--174.
\bibitem{FRR} J. Fura, A. Ratajczak, S. Rybicki, \emph{Existence and continuation of periodic solutions of autonomous Newtonian systems},  J. Differential Equations 218(1) (2005), 216--252.
\bibitem{Geba} K. G\c{e}ba, \emph{Degree for gradient equivariant maps and equivariant Conley index}, Topological nonlinear analysis II, Birkhäuser (1997),  247--272.
\bibitem{Geba1} K. G\c{e}ba, Private communication.
\bibitem{GolKluSte2} A. Go{\l}\c{e}biewska, J. Kluczenko, P. Stefaniak, \emph{Bifurcations from degenerate orbits of solutions of nonlinear elliptic systems}, arXiv:2107.00408.
\bibitem{GolRyb1} A. Go{\l}\c{e}biewska, S. Rybicki, \emph{Global bifurcations of critical orbits of $G$-invariant strongly indefinite functionals}, Nonlinear Anal. 74(5) \rm (2011), 1823--1834.
\bibitem{golry}  A. Go\l\c{e}biewska, S.  Rybicki, {\em Equivariant Conley index versus  degree for equivariant gradient maps}, Discrete Contin. Dyn. Syst. Ser. S 6(4) (2013), 985--997.
\bibitem{GolSte}  A. Go{\l}\c{e}biewska, P. Stefaniak, \emph{Global bifurcation from an orbit of solutions to non-cooperative semi-linear Neumann problem}, J. Differential Equations 268(11) (2020) 6702--6728.
\bibitem{hatcher} A. Hatcher, \textit{Algebraic topology}, Cambridge University Press, Cambridge, 2002.
\bibitem{Illman} S. Illman, \emph{The equivariant triangulation theorem for actions of compact Lie groups}, Math. Ann. 262(4) (1983), 487--501.
\bibitem{Izydorek} M. Izydorek, \emph{Equivariant Conley index in Hilbert spaces and applications to strongly indefinite problems}, Nonlinear Anal.  51(1) (2002), 33--66.
\bibitem{Kawakubo} K. Kawakubo, \textit{The theory of transformation groups}, Oxford University Press, New York, 1991.
\bibitem{kawasaki} T. Kawasaki, \textit{Cohomology of twisted projective spaces and lens complexes}, Math. Ann. 206 (1973), 243--248.
\bibitem{KPR} M. Kowalczyk, E. P\'{e}rez-Chavela, S. Rybicki, \emph{Symmetric Lyapunov center theorem for orbit with nontrivial isotropy group},  Adv. Differential Equations 25(1-2) (2020), 1--30.
\bibitem{lyapunov} A. M. Lyapunov, \textit{Probl\`eme g\'en\'eral de la stabili\'te du mouvement,} Ann. Fac. Sci. Univ. Toulouse (2) 9 (1907), 203--474.
\bibitem{mayer} K. Mayer, \textit{G-invariante Morse-funktionen}, Manuscripta Math. 63(1) (1989), 99--114.
\bibitem{mawhin} J. Mawhin, M. Willem, \textit{Critical Point Theory and Hamiltonian Systems}, Springer-Verlag, New York, 1989.
\bibitem{morost} J. A. Montaldi, R. M. Roberts, I. N. Stewart, \emph{Periodic solutions near equilibria of symmetric Hamiltonian systems}, Phil. Trans. R. Soc. Lond. A 325(1584) (1988), 237--293.
\bibitem{moser} J. Moser, \textit{Periodic orbits near an equilibrium and a theorem by Alan Weinstein}, Comm. Pure Appl. Math. 29(6) (1976), 724--747.
\bibitem{PRS} E. P\'{e}rez-Chavela, S. Rybicki, D. Strzelecki, \emph{Symmetric Liapunov center theorem}, Calc. Var. Partial Differential Equations 56(2) (2017), doi:10.1007/s00526-017-1120-1.
\bibitem{PRS2}  E. P\'{e}rez-Chavela, S. Rybicki, D. Strzelecki, \emph{Symmetric Liapunov center theorem for minimal orbit}, J. Differential Equations 265(3) (2018), 752--778.
\bibitem{Serre} J. P. Serre,  \textit{Linear representations of finite groups}, Graduate Texts in Mathematics 42, Springer-Verlag, New York-Heidelberg, 1977.
\bibitem{SmoWas} J. Smoller, A. Wasserman, \emph{Bifurcation and symmetry-breaking}, Invent. Math. 100(1) (1990), 63--95.
\bibitem{Strzelecki} D. Strzelecki, \emph{Periodic solutions of symmetric Hamiltonian systems}, Arch. Ration. Mech. Anal. 237(2) (2020), 921--950.
\bibitem{szulkin} A. Szulkin, {\em Bifurcation for strongly indefinite functionals and a Liapunov type theorem for Hamiltonian systems},  Differential Integral Equations 7(1) (1994), 217--234.
\bibitem{weinstein} A. Weinstein, \textit{Normal modes for nonlinear Hamiltonian systems}, Invent. Math. 20 (1973), 47--57.
\end{thebibliography}
\end{document}